\documentclass[11pt]{amsart}

\usepackage{amsmath}
\usepackage{amsfonts}
\usepackage{amssymb}
\newcommand{\R}{{\mathbb R}}
\newcommand{\N}{{\mathbb N}}

\usepackage{enumerate}
\usepackage{hyperref}
\usepackage{cleveref}
\usepackage{xcolor}

\usepackage{graphicx}
\usepackage{float}
\usepackage{subfigure}
\usepackage{tikz}

\newtheorem{theorem}{Theorem}[section]
\newtheorem{lemma}[theorem]{Lemma}

\newtheorem{remark}[theorem]{Remark}

\newtheorem{proposition}[theorem]{Proposition}

\newtheorem{example}[theorem]{Example}


\begin{document}

\title[Infinity Laplace equations on infinite graphs]{Some existence and uniqueness results for infinity Laplace equations on infinite graphs}

\author{Fengwen Han}
\address{Fengwen Han: School of Mathematics and Statistics, Henan University, 475004 Kaifeng, Henan, China}
\email{\href{mailto:fwhan@outlook.com}{fwhan@henu.edu.cn}}

\author{Tao Wang}
\address{Tao Wang: Beijing International Center for Mathematical Research, Peking University, 100871, Beijing, China}
\email{\href{mailto:taowang25@pku.edu.cn}{taowang25@pku.edu.cn}}

\date{Oct 31, 2025}
\subjclass[2020]{35J70, 35J94}

\begin{abstract}
    We study the Dirichlet problem of the following discrete infinity Laplace equation on unbounded subgraphs
    \begin{equation*}
        \Delta_{\infty}u(x):=\inf_{y\sim x}u(y)+\sup_{y\sim x}u(y)-2u(x)=f(x).
    \end{equation*}
  For the homogeneous case ($f=0$), the existence and uniqueness of sublinear solutions are established. This result is applied to prove the existence and uniqueness of sublinear solutions for the homogeneous (normalized) infinity Laplace equations on unbounded Euclidean domains. Uniqueness is also shown for the case $f \geq 0$ on trees.
\end{abstract}

\maketitle

\section{Introduction}

For a function $u \in C^2$ with $\nabla u(x) \ne 0$, the normalized infinity Laplacian on $\R^N$ is defined by
\begin{equation}
    \Delta_{\infty} u(x):=\frac{1}{|\nabla u(x)|^2}\sum_{i,j}u_{x_i}(x)u_{x_ix_j}(x)u_{x_j}(x).
\end{equation}
See \cite{CIL1992} for the definition in the viscosity sense. We study the existence and uniqueness of solutions to the following infinity Laplace equation 
\begin{align}\label{equ:main_continuous_equation}
    \begin{cases}
        \Delta_{\infty} u(x) = f(x), \quad & x\in \Omega;\\
        u(x) = g(x), & x\in \partial \Omega.
    \end{cases}
\end{align}
Due to the lack of regularity, viscosity theory is the only method to deal with this problem for a long time. We refer to \cite{O2005, ES2008, L2014} for regularity results.
For a bounded domain $\Omega \subset \R^N$, the unique viscosity solution $u$ with $f\equiv 0$ is exactly the \emph{absolutely minimal Lipschitz extension} of $g$, i.e. $\mathrm{Lip}_{U}u=\mathrm{Lip}_{\overline{U}}u$ for any open set $U\subset \Omega$. We refer to \cite{aronsson1967extension,jensen1993uniqueness,barles2001existence,aronsson2004tour} for more details. Lu and Wang \cite{lu2008pde} proved the existence and uniqueness of \eqref{equ:main_continuous_equation} in the case that $f >0$ or $f < 0$ on a bounded open subset of $\R^n$ using Perron's method. For (non-normalized) infinity Laplace equations, we refer to \cite{lu2008inhomogeneous, lindqvist2016notes} for more existence and uniqueness results. It is worth noting that the uniqueness result fails for equation \eqref{equ:main_continuous_equation} with  sign-changing $f$; see \cite{lu2008inhomogeneous} for a counterexample. We also refer to \cite{LY2012, G2004, LMZ2019} and the references therein for other topics on the infinity Laplacian.

Results for unbounded $\Omega$ are very few. The following result is due to Crandall, Gunnarsson, and Wang \cite{crandall2007uniqueness}.
\begin{theorem}[\cite{crandall2007uniqueness}]\label{CGW results}
    Let $\Omega \subset \mathbb{R}^N$ be bounded and $\partial \Omega$ be bounded. Let $u, v \in C(\overline{\Omega})$, and $\Delta_{\infty}u \geq 0$, $\Delta_{\infty}v \leq 0$ in $\Omega$. Assume also that 
    \[
     \limsup_{|x| \to \infty}\frac{u(x)}{|x|} \leq 0
    \]
    and 
    \[
     \liminf_{|x| \to \infty}\frac{v(x)}{|x|} \geq 0.
    \]
    Then for $x \in \Omega$, 
    \[
     u(x) - v(x) \leq \max_{\partial \Omega}(u - v).
    \]
\end{theorem}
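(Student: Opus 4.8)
The plan is to establish the comparison by taming the ``boundary at infinity'' with a cone perturbation and then invoking the comparison machinery for $\Delta_\infty$ on a compact region. Write $M=\max_{\partial\Omega}(u-v)$ and fix an interior point $x_0$; the goal is $u(x_0)-v(x_0)\le M$. The two growth hypotheses combine to give $\limsup_{|x|\to\infty}\big(u(x)-v(x)\big)/|x|\le0$, so for each $\epsilon>0$ the function $u(x)-v(x)-\epsilon|x|$ tends to $-\infty$ as $|x|\to\infty$. The cone $x\mapsto|x|$ is the natural device here because it is itself $\infty$-harmonic ($\Delta_\infty|x|=0$ for $x\neq0$), and the hypotheses are calibrated exactly to its linear growth. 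Consequently $\sup_{\overline\Omega}(u-v-\epsilon|\cdot|)$ is attained on a compact set, and it suffices to prove $u(x_0)-v(x_0)-\epsilon|x_0|\le M+o_\epsilon(1)$ and then let $\epsilon\to0$.

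Since $u-v$ is not a difference that can be inserted into the equation, I would make the previous step rigorous by doubling the variables. For $\alpha,\epsilon>0$ set
$$\Phi_{\alpha,\epsilon}(x,y)=u(x)-v(y)-\frac{|x-y|^2}{2\alpha}-\epsilon|x|$$
on $\overline\Omega\times\overline\Omega$. The $-\epsilon|x|$ term together with the growth of $u$ confines $x$ to a bounded set, and the penalization then confines $y$ near $x$, so for fixed $\alpha$ one has $\Phi_{\alpha,\epsilon}\to-\infty$ at infinity and a maximum is attained at some $(\bar x,\bar y)$ lying in a compact subset of $\overline\Omega\times\overline\Omega$. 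The usual penalization estimates give $|\bar x-\bar y|^2/\alpha\to0$ and $\bar x,\bar y$ approaching a common point as $\alpha\to0$. If $\bar x$ or $\bar y$ lies on $\partial\Omega$ one concludes $\sup\Phi_{\alpha,\epsilon}\le M+o(1)$ from continuity and $u-v\le M$ on $\partial\Omega$; otherwise $(\bar x,\bar y)$ is an interior maximizer and the sub/supersolution properties of $u$ and $v$ apply through the Crandall--Ishii theorem on sums, which yields a common gradient $p=(\bar x-\bar y)/\alpha$ and symmetric matrices $X\le Y$ testing $\Delta_\infty$ at $\bar x$ and $\bar y$.

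The hard part is that the equation is homogeneous, so the sub- and supersolution inequalities at the interior maximizer only produce the borderline relation $\langle Xp,p\rangle\ge0\ge\langle Yp,p\rangle$, which is consistent with $X\le Y$ rather than contradictory; this is precisely the difficulty underlying the uniqueness theory for $\infty$-harmonic functions (cf.\ \cite{jensen1993uniqueness}). To break the tie I would run the argument against the strict equations $\Delta_\infty=\pm\delta$, approximating $u$ and $v$ by solutions with a definite sign of the right-hand side and passing $\delta\to0$; the normalized operator's degeneracy when $p$ nearly vanishes is handled by the degenerate branch of the viscosity definition, and here the $\infty$-harmonicity of the cone $|x|$ is what keeps its contribution from spoiling the sign. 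Sending $\delta\to0$, then $\alpha\to0$, and finally $\epsilon\to0$ yields $u(x_0)-v(x_0)\le M$. The main obstacles are therefore twofold: controlling the boundary at infinity, which the cone perturbation resolves by converting the linear-growth hypotheses into genuine coercivity, and extracting a strict comparison from a borderline matrix inequality, which is the genuinely infinity-Laplacian difficulty.
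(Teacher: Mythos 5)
Your outline stalls exactly at the step you yourself flag as ``the hard part,'' and what you offer there is a pointer rather than a proof. Having doubled variables and reached the borderline inequalities, you propose to ``run the argument against the strict equations $\Delta_\infty=\pm\delta$, approximating $u$ and $v$ by solutions with a definite sign of the right-hand side and passing $\delta\to 0$.'' But no construction of such approximants is given, and on an unbounded domain none is available off the shelf: you would need sub/supersolutions of the perturbed equations with the same boundary data, converging to $u$ and $v$, \emph{and} with the sublinear growth preserved so that your coercivity step survives --- producing these is essentially as hard as the theorem itself (existence results such as Lu--Wang's are for bounded domains, and Jensen's uniqueness machinery \cite{jensen1993uniqueness}, which is what this step amounts to, is a substantial argument you are invoking wholesale, not supplying). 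There are also concrete technical errors in the doubling step: because of the $-\epsilon|x|$ term, the test gradient at $\bar x$ is $p+\epsilon\bar x/|\bar x|$, not the ``common gradient $p=(\bar x-\bar y)/\alpha$'' you assert, so the two viscosity inequalities are evaluated at \emph{different} gradients and the displayed relation $\langle Xp,p\rangle\ge 0\ge\langle Yp,p\rangle$ is not what the theorem on sums yields here. Relatedly, the remark that ``the $\infty$-harmonicity of the cone $|x|$ keeps its contribution from spoiling the sign'' has no force as stated: $\Delta_\infty$ is nonlinear, so the cone's harmonicity does not pass through sums or differences, and making the cone perturbation compatible with the operator is precisely where the real work lies (one must also place the cone vertex outside $\overline\Omega$ and handle the degenerate branch when the test gradient vanishes). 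So the proposal correctly identifies the two obstacles (coercivity at infinity; strict comparison) and resolves only the first.

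For comparison: the paper does not prove \Cref{CGW results} at all --- it is quoted from \cite{crandall2007uniqueness} --- and instead recovers it (see the remark following \Cref{thm:existence_result_of_continuous_case}) as a corollary of the more general \Cref{thm:comparison_result_of_continuous_case}, proved by a genuinely different, discrete route: discretize $\overline\Omega$ into $\varepsilon$-ball graphs, use \Cref{lem:armstrong_result} to turn $u^{\varepsilon}$ and $v_{\varepsilon}$ into discrete sub/supersolutions, and apply the discrete sublinear comparison \Cref{thm:compare_result_for_sublinear_on_graph}. The latter is proved by the elementary slope-raising modification $u_{\varepsilon}$ of \Cref{prop:uvarepsilon} together with the path-marching estimate of \Cref{lem:compare_for_uvarepsilon}, which forces a gradient flow line of definite slope to hit the boundary --- this sidesteps Crandall--Ishii and the strict-approximation issue entirely, i.e., exactly the gap in your outline. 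If you wish to repair the continuous argument instead, the honest options are to carry out the $\pm\delta$ approximation in full detail on truncated domains $\Omega\cap B_R$ and control the artificial boundary $\partial B_R$ with cone barriers (this is close to the actual argument in \cite{crandall2007uniqueness}, and note that both growth hypotheses are genuinely needed there: with $u\equiv 0$ and $v(x)=1-|x|$ on an exterior domain, the conclusion fails, so any barrier construction must use the $\liminf$ condition on $v$ as well), or to import bounded-domain comparison as a black box and make the truncation rigorous.
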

Note that \Cref{CGW results} implies that when $f \equiv 0$, if equation \eqref{equ:main_continuous_equation} admits a sublinear solution, then the solution is unique. We specifically mention the more general comparison result on exterior domains in \cite{HZ2021}.

In 2009, Peres et al. \cite{PSSW2009} introduced the tug-of-war game, which is a two-player, zero-sum, stochastic game. Given a connected undirected graph $G = (V, E)$, where $V$ is the set of vertices and $E$ is the set of edges. For any $x, y \in V$, we write $x \sim y$ if there exists an edge connecting $x$ and $y$. The following discrete infinity Laplace equation on $G$ is intensively studied by a probabilistic method in \cite{PSSW2009}:
\begin{equation}
    \label{equ:DiscreteInfinityLaplaceEquation}
    \begin{cases}
        \Delta_{\infty}u(x)=f(x), \quad &x\in X \subset V,\\
        u(x)=g(x), & x \in Y = V\setminus X,
    \end{cases}
\end{equation}
where $f$ and $g$ are bounded functions on $X$ and $Y$ respectively, and
\begin{equation}
    \Delta_{\infty}u(x):=\inf\limits_{y\sim x}u(y)+\sup\limits_{y\sim x}u(y)-2u(x)
\end{equation}
is called the discrete infinity Laplacian (We use the same symbol as the normalized infinity Laplacian on $\R^N$). By a probabilistic method, for any graph, Peres et al. \cite{PSSW2009} proved the existence and uniqueness result for $f\equiv 0$, $\inf f>0$, or $\sup f <0$. The tug-of-war game presents a probabilistic interpretation to the equation \eqref{equ:main_continuous_equation}.

The $\varepsilon$-tug-of-war game introduced in \cite{PSSW2009} provides a discrete method to study the normalized infinity Laplace equation \eqref{equ:main_continuous_equation}. In fact, given a bounded domain $\Omega\subset \R^N$ and $\varepsilon>0$, a corresponding graph $G_{\varepsilon}=(V,E)$ is constructed via setting $V=\overline{\Omega}$, and $x\sim y$ if and only if $d_{\overline{\Omega}}(x,y) < \varepsilon$, where $d_{\overline{\Omega}}$ is the induced intrinsic metric of $\overline{\Omega}$. Then the solution of the following discrete infinity Laplace equation converges to a solution of \eqref{equ:main_continuous_equation} as $\varepsilon \to 0$
\begin{equation}
    \begin{cases}
        \Delta_{\infty}^{\varepsilon}u(x)=\varepsilon^2 f(x), \quad & x\in \Omega;\\
        u(x)=g(x), & x\in \partial \Omega,
    \end{cases}
\end{equation}
where $\Delta_{\infty}^{\varepsilon}$ defined via $$\Delta_{\infty}^{\varepsilon}u(x):=\inf\limits_{y\in B_x(\varepsilon)}u(y)+\sup\limits_{y\in B_x(\varepsilon)}u(y)-2u(x)$$ is the discrete infinity Laplacian on $G_{\varepsilon}$, $f \in C(\Omega)\cap L^{\infty}(\Omega)$, and $g\in C(\partial\Omega)$. The convergence was proved by \cite{PSSW2009} for $f\equiv 0$, $\inf f >0$, or $\sup f <0$ using a probabilistic method. Armstrong and Smart \cite{armstrong2012finite} introduced a ``boundary-biased'' $\varepsilon$-tug-of-war game, based on which they proved the convergence for all $f \in C(\Omega)\cap L^{\infty}(\Omega)$. We also refer to \cite{oberman2005convergent, le2007absolutely} for $f\equiv 0$, and \cite{peres2010biased} for other settings. 

Recently, Han and Wang \cite{han2025discrete} investigated the discrete infinity Laplace equation \eqref{equ:DiscreteInfinityLaplaceEquation} on a subgraph with finite width.
We say that a subgraph $X \subset V$ has finite width if the distances from all vertices to the boundary are uniformly bounded, i.e., $\mathrm{width}(X):= \sup\{d(x, V \setminus X): x \in X\}<\infty$, where $d(x, V \setminus X)$ is the combinatorial distance between $x$ and $V \setminus X$. Using Perron's method, they demonstrated the existence of bounded solutions. They also proved the uniqueness if $f\geq 0$ or $f\leq 0$ by establishing a comparison result.
\begin{theorem}[\cite{han2025discrete}]\label{thm:existence_and_uniquness_result_for_inhomogeneous_equations_on_bounded_width_graphs}
 Let $G=(V,E)$ be a graph, $X\subset V$ with $\mathrm{width}(X)<+\infty$, $f\in L^{\infty}(X)$ and $g \in L^{\infty}(V\setminus X)$. Then the discrete infinity Laplace equation \eqref{equ:DiscreteInfinityLaplaceEquation} admits a bounded solution. Moreover, the bounded solution is unique if $f \geq 0$ or $f\leq 0$.
\end{theorem}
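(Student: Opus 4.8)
The plan is to establish existence by Perron's method and uniqueness by a comparison principle, with the finite‑width hypothesis entering twice: through explicit bounded barriers and through a boundary‑reaching argument. First I would record that the distance to the boundary $h(x):=d(x,V\setminus X)$ satisfies $0\le h\le \mathrm{width}(X)=:w<\infty$, with $h=0$ on $Y$ and, for every interior $x$, some neighbour attaining $h=h(x)-1$ while all neighbours obey $h\le h(x)+1$. Fixing a strictly increasing concave profile $\phi$ on $\{0,\dots,w+1\}$ with $\phi(0)=0$ and constant second difference $\phi(k+1)+\phi(k-1)-2\phi(k)=-2B$, the monotonicity of $\phi$ gives, for every $x\in X$,
\begin{equation*}
\Delta_{\infty}(\phi\circ h)(x)\le \phi(h(x)+1)+\phi(h(x)-1)-2\phi(h(x))=-2B,
\end{equation*}
and symmetrically $\Delta_{\infty}(-\phi\circ h)\ge 2B$. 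Choosing $B\ge \tfrac12\|f\|_{\infty}$ and adding the constant $\pm\|g\|_{\infty}$ produces a bounded supersolution $\overline u=\phi\circ h+\|g\|_{\infty}$ and a bounded subsolution $\underline u=-\phi\circ h-\|g\|_{\infty}$, with $\underline u\le\overline u$ and the correct ordering against $g$ on $Y$. This is exactly where $\mathrm{width}(X)<\infty$ is used to keep the barriers bounded.

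Second, with these barriers I would run Perron's method on the family $\mathcal S$ of bounded subsolutions $v$ with $\underline u\le v\le\overline u$, and set $U:=\sup_{v\in\mathcal S}v$. Since the graph is locally finite, the subsolution inequality $\inf_{y\sim x}v(y)+\sup_{y\sim x}v(y)-2v(x)\ge f(x)$ is stable under taking suprema, so $U$ is again a subsolution and the bound $\underline u\le U\le\overline u$ is automatic. To see that $U$ is a supersolution, suppose $\Delta_{\infty}U(x_0)>f(x_0)$ at some $x_0\in X$. At such a point one necessarily has $U(x_0)<\overline u(x_0)$, because $U\le\overline u$ with equality at $x_0$ would force $\Delta_{\infty}U(x_0)\le\Delta_{\infty}\overline u(x_0)\le f(x_0)$. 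The strict inequality then leaves room for the standard one‑point bump, yielding a member of $\mathcal S$ strictly above $U$ at $x_0$ and contradicting maximality. Hence $\Delta_{\infty}U=f$ on $X$ and $U=g$ on $Y$: a bounded solution exists for \emph{arbitrary} bounded $f$, no sign condition being used here.

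Third, for uniqueness under a sign condition I would prove the comparison principle that every bounded subsolution $u$ lies below every bounded supersolution $v$ as soon as $u\le v$ on $Y$. The map $(u,v,f)\mapsto(-v,-u,-f)$ sends this statement for $f\ge 0$ to the same statement for $f\le 0$, so it suffices to treat $f\ge 0$, in which case $\Delta_{\infty}u\ge 0$, i.e.\ $u$ is infinity‑subharmonic. The engine is a slope‑monotonicity estimate: writing $S^{+}(x)=\max_{y\sim x}u(y)-u(x)$ and $S^{-}(x)=u(x)-\min_{y\sim x}u(y)$, infinity‑subharmonicity reads $S^{+}\ge S^{-}$, and if $x^{+}$ realizes $S^{+}(x)$ then $S^{+}(x^{+})\ge S^{-}(x^{+})\ge S^{+}(x)$. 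Thus $S^{+}$ is non‑decreasing along steepest‑ascent paths; since $u$ is bounded, such a path cannot remain in $X$ with a positive slope, so it must reach $Y$, while on any flat stretch ($S^{+}=0$) the function is locally constant and $\mathrm{width}(X)<\infty$ lets a shortest path carry that value to $Y$ in at most $w$ steps. This delivers the maximum principle $\sup_X u\le\sup_Y u$ for bounded infinity‑subharmonic functions.

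Finally I would upgrade this to the two‑function comparison. Subtracting the sub‑ and supersolution inequalities (the $f$ cancels) gives, at each interior $x$,
\begin{equation*}
u(x)-v(x)\le \max_{y\sim x}\bigl(u(y)-v(y)\bigr),
\end{equation*}
so $w:=u-v$ admits no interior strict maximum and $w\le 0$ on $Y$. The difficulty, and the reason a sign on $f$ is essential, is that this inequality alone permits $w$ to plateau along an infinite interior path, and for $f\ge 0$ the supersolution $v$ carries no favourable sign of its own; I expect this to be the main obstacle. I would resolve it as in the one‑function case: use the steepest‑ascent estimate for the infinity‑subharmonic $u$ to steer, from a near‑maximizer of $w$, a path along which $w$ does not decrease, and then invoke $\mathrm{width}(X)<\infty$ to force that path to meet $Y$ within $w$ steps, where $w\le 0$ contradicts $\sup_X w>0$. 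With comparison established, any two bounded solutions sharing the boundary data $g$ satisfy $u_1\le u_2$ and $u_2\le u_1$, proving uniqueness for $f\ge 0$, and by the symmetry above also for $f\le 0$.
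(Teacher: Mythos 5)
Your existence half is essentially sound, and it follows the same route the paper attributes to \cite{han2025discrete} (the theorem is only quoted in this paper, with the one-line description ``Perron's method for existence, a comparison result for uniqueness''; there is no detailed proof here to compare against). The barrier $\phi\circ h$ built from the boundary distance with constant second difference $-2B$ is correct, and bounded precisely because $\mathrm{width}(X)<\infty$; the bump argument and the observation that $U(x_0)<\overline u(x_0)$ wherever $\Delta_{\infty}U(x_0)>f(x_0)$ are fine. Two repairable details: stability of the subsolution property under suprema does not need local finiteness (which the theorem does not assume) --- it follows from monotonicity of $\sup_{y\sim x}$ and $\inf_{y\sim x}$ in the function, and your ``$\max_{y\sim x}$'' arguments should use $\varepsilon$-approximate maximizers since suprema over neighbours need not be attained; and you should say how $U$ attains $g$ on $Y$, e.g.\ by placing in $\mathcal S$ the modification of $\underline u$ that equals $g$ on $Y$ (raising boundary values preserves $\Delta_{\infty}\ge f$ at interior points).

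The uniqueness half has a genuine gap exactly at its crux. You propose to ``steer, from a near-maximizer of $u-v$, a path along which $u-v$ does not decrease'' using the steepest-ascent estimate for the subharmonic $u$, and then to ``invoke $\mathrm{width}(X)<\infty$ to force that path to meet $Y$ within $w$ steps.'' Neither clause works as stated. First, the two steering requirements conflict: at an $\eta$-near maximizer $x_0$ of $u-v$ one has $u(y)-u(x_0)\le v(y)-v(x_0)+\eta$ for every $y\sim x_0$, so a step nearly maximizing $u$ carries no lower bound whatsoever on the change of $u-v$ (near-maximality bounds $(u-v)(y)$ only from above); conversely, a step nearly maximizing $u-v$ keeps $u-v$ from decreasing but gives no positive lower bound on the increments of $u$, so boundedness of $u$ produces no contradiction and the path may remain in $X$ forever. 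Second, finite width bounds $d(x,Y)\le \mathrm{width}(X)$ for each vertex, not the time your constructed path takes to reach $Y$: a path along which $u-v$ does not decrease can orbit a cycle, or drift to infinity at bounded distance from $Y$, without ever entering it. The analogous step in your one-function maximum principle is legitimate only because on a flat stretch $u$ is \emph{literally constant} on the reachable component, so one may switch to a geodesic to $Y$ and carry the exact value; a near-maximal set of $u-v$ has no such rigidity --- neither $u$, $v$, nor $u-v$ is constant there --- so the geodesic substitution is unavailable. Note that even this paper's $\varepsilon$-steepening machinery in Section \ref{sec3} is used to \emph{reduce} the sublinear comparison to the finite-width comparison \Cref{thm:compare_result_for_inhomogeneous_equations_on_bounded_width_graphs}, not to prove it; that comparison principle, where the sign of $f$ must enter essentially (uniqueness fails for sign-changing $f$), is the real content of the uniqueness assertion, and your final paragraph does not yet establish it.
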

\begin{theorem}[\cite{han2025discrete}]\label{thm:compare_result_for_inhomogeneous_equations_on_bounded_width_graphs}
    Let $G=(V,E)$ be a graph, $X\subset V$ with $\mathrm{width}(X)<+\infty$, $u,v\in C(V)$ be bounded and satisfy
    $$-\Delta_{\infty}u(x) \geq f(x) \geq -\Delta_{\infty}v(x), \ \forall \ x\in X,$$ 
    where $f$ is a nonnegative or nonpositive function on $X$.
    Then 
    \begin{equation}
        \sup\limits_{X}(u-v) \leq \sup\limits_{V \setminus X}(u-v).
    \end{equation}
\end{theorem}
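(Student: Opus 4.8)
\section*{Proof proposal}

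The plan is to argue by contradiction and run a discrete maximum principle for the difference $w:=u-v$, with the soft pointwise information coming from the two differential inequalities and the argument closed off by the finite-width hypothesis. First I would normalize the sign of $f$. Since $\Delta_\infty$ is odd, $\Delta_\infty(-\phi)=-\Delta_\infty\phi$, the substitution $(u,v)\mapsto(-v,-u)$ leaves $u-v$ and the boundary data unchanged while replacing $f$ by $-f$; it therefore suffices to treat $f\ge 0$, which I assume from now on. The pointwise input is that for every $x\in X$,
\[
\Delta_\infty u(x)\ \le\ -f(x)\ \le\ \Delta_\infty v(x),
\]
so in particular $\Delta_\infty u\le \Delta_\infty v$ on $X$, carrying a slack of size at least $f(x)\ge 0$ that the sign condition makes available.

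Next I would isolate the local \emph{rigidity} behind the propagation. Writing $a_y=u(y)-u(x)$ and $b_y=v(y)-v(x)$ for $y\sim x$, the inequality $\inf_y a_y+\sup_y a_y\le \inf_y b_y+\sup_y b_y$ forces $\inf_y a_y\le \inf_y b_y$ or $\sup_y a_y\le \sup_y b_y$, and in either case one reads off a neighbor $y^\ast\sim x$ at which $u-v$ does not move away from its extreme value (take $y^\ast$ a minimizer of $u$ over neighbors in the first case, a maximizer of $v$ in the second). This exhibits, at every interior vertex, a neighbor toward which the extreme value of $u-v$ propagates. To upgrade this to a \emph{strict} step I would perturb with the bounded height function $\rho(x):=d(x,V\setminus X)\le \mathrm{width}(X)=:W$, replacing $u$ by $u_\delta:=u+\delta\,\Phi(\rho)$ for small $\delta>0$ and a suitable increasing $\Phi$, chosen so that the bound $f\ge 0$ yields a strictly signed infinity-Laplacian slack of order $\delta$ for $u_\delta$ against $v$, uniformly on $X$. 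Because $\rho$ takes only the values $0,1,\dots,W$, the barrier $\Phi(\rho)$ is bounded and $\|u_\delta-u\|_\infty=O(\delta)$, so the perturbation disturbs the final comparison by at most $O(\delta)$.

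With strictness in place I would run an $\varepsilon$-extremizer argument, which is the essential point because $X$ is infinite and the relevant extremum of $w$ need not be attained. If the claimed inequality failed, I pick $x_0\in X$ at which $w_\delta:=u_\delta-v$ is within $\varepsilon$ of the relevant extreme value over $X$; the strict version of the rigidity step then produces $x_1\sim x_0$ improving $w_\delta$ by a definite amount $\gtrsim\delta$ that is independent of $\varepsilon$, and iterating builds a path along which $w_\delta$ changes monotonically by at least $\sim\delta$ at each step. Boundedness of $u,v$ (hence of $w_\delta$) forbids an infinite such path, and inside $X$ the rigidity step always supplies a successor, so the path can only terminate by crossing into $V\setminus X$; evaluating $w_\delta$ there contradicts the assumed strict gap with the boundary supremum once $\varepsilon$, and then $\delta$, are sent to $0$. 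Here finite width enters twice: it bounds the barrier so that the $\delta\to0$ passage is harmless, and it confirms that the exit into $V\setminus X$ occurs in boundedly many ($\le W$) steps.

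The main obstacle is exactly the tension between non-attainment and strictness: on an infinite interior one cannot simply select an extremal vertex and quote the rigidity, so everything rests on converting the soft inequality $\Delta_\infty u\le\Delta_\infty v$ into a \emph{uniform} per-step gain. This is where both hypotheses become indispensable — the sign of $f$ supplies the strict slack through the $\rho$-perturbation, while $\mathrm{width}(X)<\infty$ keeps the barrier $\Phi(\rho)$ bounded and forces any propagating path to reach $V\setminus X$. The one genuinely technical verification I expect to have to carry out is the construction of $\Phi$: using only $f\ge 0$, the $1$-Lipschitz behavior of $\rho$, and the fact that every interior vertex has a neighbor with $\rho$ one smaller, one must check that $\Phi$ can be chosen so that $\delta\,\Phi(\rho)$ produces a strict infinity-Laplacian slack of order $\delta$ at \emph{every} $x\in X$ simultaneously.
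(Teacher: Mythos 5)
The paper gives no proof of this theorem --- it is imported verbatim from \cite{han2025discrete} --- so your proposal has to stand on its own, and it does not: the central rigidity step points in the wrong direction. With your normalization $\Delta_\infty u\le -f\le \Delta_\infty v$, \emph{both} of your cases produce a neighbor $y^\ast$ at which $w:=u-v$ does \emph{not exceed} $w(x)$: if $\inf_y a_y\le\inf_y b_y$ and $u(y^\ast)$ nearly realizes $\inf_{y\sim x}u(y)$, then $w(y^\ast)\le\inf_{y\sim x}u(y)-\inf_{y\sim x}v(y)+\varepsilon\le w(x)+\varepsilon$, and symmetrically when $\sup_y a_y\le\sup_y b_y$ with $y^\ast$ a near-maximizer of $v$. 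A weakly descending neighbor propagates the \emph{infimum} of $w$, so your machinery can only yield $\inf_X(u-v)\ge\inf_{V\setminus X}(u-v)$; the iteration you describe, launched from a point where $w$ is within $\varepsilon$ of its supremum, has no step to take. This is not patchable, because the implication with the hypothesis exactly as printed is false: take the path $a\sim x\sim b$, $X=\{x\}$ (so $\mathrm{width}(X)=1$), $u(a)=u(b)=0$, $u(x)=1$, $v\equiv 0$, $f(x)=1$; then $-\Delta_\infty u(x)=2\ge f(x)\ge 0=-\Delta_\infty v(x)$ with $f\ge0$, yet $\sup_X(u-v)=1>0=\sup_{V\setminus X}(u-v)$. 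The displayed hypothesis is evidently a misquote for $-\Delta_\infty v(x)\ge f(x)\ge-\Delta_\infty u(x)$ --- the orientation in which the theorem is actually invoked in the proof of \Cref{thm:compare_result_for_sublinear_on_graph}, with $u$ infinity subharmonic and $v$ superharmonic. Under that orientation your dichotomy is the right local fact, but the extremizing choices must be mirrored (a near-maximizer of $u$, resp.\ a near-minimizer of $v$), giving $w(y^\ast)\ge w(x)-\varepsilon$.

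Even after fixing the orientation, the step you yourself flagged --- choosing $\Phi$ so that $u+\delta\,\Phi(\rho)$ carries a uniform slack of order $\delta$ at every $x\in X$ --- genuinely fails, and it is the only place the sign of $f$ was going to enter (your pointwise input $\Delta_\infty u\le\Delta_\infty v$ holds for arbitrary $f$, whereas comparison is known to fail for sign-changing $f$, so the sign must do real work somewhere). Since $\rho$ is integer-valued with $1\le\rho\le W$ on $X$, it attains a maximum $k^\star$, and every $x\in X$ has a neighbor with $\rho=\rho(x)-1$ while all neighbors satisfy $\rho\le\rho(x)+1$; at a $\rho$-maximal vertex one then gets $\Delta_\infty(\Phi\circ\rho)\le\Phi(k^\star-1)-\Phi(k^\star)\le 0$ for any increasing $\Phi$, so testing with $u=v\equiv0$, $f\equiv0$ shows that no increasing barrier produces a strictly positive slack everywhere. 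A decreasing convex barrier such as $\theta^{\rho}$, $0<\theta<1$, \emph{is} uniformly strictly infinity subharmonic, but $\Delta_\infty$ is not additive: at a vertex where the near-maximal and near-minimal neighbors of $u$ both sit at the level $\rho(x)+1$ one can arrange $\Delta_\infty(u+\delta\theta^{\rho})(x)=\Delta_\infty u(x)-2\delta\theta^{\rho(x)}(1-\theta)$, a loss rather than a gain. This is exactly why comparison results for $\Delta_\infty$ are proved by path arguments rather than additive barriers. The working mechanism, visible in this paper in \Cref{prop:uvarepsilon} and \Cref{lem:compare_for_uvarepsilon}, is to replace $u$ by a subharmonic perturbation $u_\varepsilon\le u$ with uniform slope $L(u_\varepsilon,\cdot)\ge\varepsilon$ and then march a near-$\sup$ path along which $u_\varepsilon$ gains at least $\varepsilon-\varepsilon_0$ per step; boundedness of $u$ forces the path to exit $X$, where the comparison closes. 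Your termination instinct (boundedness plus finite width) is correct, but the uniform per-step gain has to be manufactured by such a perturbation of $u$ itself, not by a barrier of the form $\Phi(\rho)$.
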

By an argument of Arzel\`a–Ascoli, Han and Wang \cite{han2025discrete} proved that on Euclidean domains with finite width, the solutions of $\varepsilon$-tug-of-war games converge as $\varepsilon \to 0$. The result essentially establishes the existence of bounded solutions to normalized infinity Laplace equations on Euclidean domains with finite width.

In this paper, we proceed to study the existence and uniqueness of solutions to the discrete infinity Laplace equations. Given a graph $G = (V, E)$ with $V = U \sqcup \delta U$, where $\delta U$ is the boundary of $U$. We assume that $U$ has infinite width, i.e., there exists a sequence $\{x_n\} \subset U$ such that $d(x_n, \delta U) \to \infty$. Let $C(W)$ and $L^{\infty}(W)$ denote the spaces of functions and bounded functions on a subset $W \subset V$, respectively. Consider the following equation
\begin{align}\label{equ:main_discrete_equation}
    \begin{cases}
        \Delta_{\infty} u(x) = f(x), \quad & x\in U;\\
        u(x) = g(x), & x\in \delta U.
    \end{cases}
\end{align}

We first consider the homogeneous case and prove the existence and uniqueness of sublinear solutions to the equation \eqref{equ:main_discrete_equation}.
\begin{theorem}\label{thm:main_theorem_1}
    Let $g\in L^{\infty}(\delta U)$. The following equation
    \begin{align*}
        \begin{cases}
            -\Delta_{\infty}u(x)=0, \quad &x\in U;\\
            u(x) = g(x), &x\in \delta U;
        \end{cases}
    \end{align*}
    admits a unique solution satisfying
    \begin{align*}
        \limsup_{r \to \infty}\frac{\sup\limits_{d(y, \delta U) \leq r}|u(y)|}{r}=0.
    \end{align*}
    Moreover, the unique solution is bounded.
\end{theorem}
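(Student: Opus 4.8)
The plan is to handle existence and uniqueness separately, using the finite–width results (\Cref{thm:existence_and_uniquness_result_for_inhomogeneous_equations_on_bounded_width_graphs} and \Cref{thm:compare_result_for_inhomogeneous_equations_on_bounded_width_graphs}) as black boxes after exhausting $U$ by finite–width subgraphs.

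For existence, set $\rho(x):=d(x,\delta U)$ and $U_n:=\{x\in U:\rho(x)<n\}$, whose boundary inside $U$ is $\Gamma_n:=\{x\in U:\rho(x)=n\}$. Each $U_n$ has width at most $n$, so by \Cref{thm:existence_and_uniquness_result_for_inhomogeneous_equations_on_bounded_width_graphs} (applied with $f\equiv 0$) the truncated problem with data $g$ on $\delta U$ and, say, $0$ on $\Gamma_n$ has a bounded solution $u_n$. Comparing $u_n$ with the constant super- and subsolutions $\pm\|g\|_\infty$ via \Cref{thm:compare_result_for_inhomogeneous_equations_on_bounded_width_graphs} gives $\|u_n\|_{\infty}\le\|g\|_\infty$ uniformly in $n$. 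Since the bound is uniform and the relevant infima and suprema are attained (e.g. when $G$ is locally finite with countable vertex set), a diagonal extraction yields a subsequence converging pointwise on $V$ to some $u$ with $u=g$ on $\delta U$ and $|u|\le\|g\|_\infty$. At each fixed $x\in U$ local finiteness lets me pass $\inf_{y\sim x}$ and $\sup_{y\sim x}$ through the pointwise limit, so $u$ solves $\Delta_\infty u=0$ on $U$. Being bounded, $u$ trivially satisfies the sublinear normalization, which simultaneously settles the ``moreover'' clause.

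For uniqueness I first record the monotonicity that makes the distance function a barrier: a direct computation gives $\Delta_\infty\rho\le 0$, since $\inf_{y\sim x}\rho(y)=\rho(x)-1$ (first step of a geodesic) and $\sup_{y\sim x}\rho(y)\le\rho(x)+1$; hence $a\rho+c$ is a supersolution for every $a\ge 0$. Using \Cref{thm:compare_result_for_inhomogeneous_equations_on_bounded_width_graphs} on $U_n$ to compare a sublinear subsolution $w$ with the supersolution $a\rho+\sup_{\delta U}w$, the required boundary inequality on $\Gamma_n$ reads $\sup_{\Gamma_n}w\le an+\sup_{\delta U}w$, which holds for all large $n$ precisely because sublinearity gives $\sup_{\Gamma_n}w=o(n)$; letting first $n\to\infty$ and then $a\to 0^{+}$ yields the maximum principle $\sup_U w\le\sup_{\delta U}w$, and dually $\inf_U w\ge\inf_{\delta U}w$ for supersolutions. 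In particular every sublinear solution is bounded with $\inf_{\delta U}g\le u\le\sup_{\delta U}g$.

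It remains to compare two sublinear solutions $u,v$; I want $\sup_U(u-v)\le 0$. Applying \Cref{thm:compare_result_for_inhomogeneous_equations_on_bounded_width_graphs} on the finite–width annuli $\{n<\rho<N\}$ gives the discrete maximum principle $\gamma_k\le\max(\gamma_n,\gamma_N)$ for $\gamma_k:=\sup_{\rho=k}(u-v)$, so that $\max(0,\gamma_N)$ is nondecreasing in $N$ (using $\gamma_0=0$); but sublinearity only forces $\gamma_N=o(N)$, which does not by itself give $\gamma_N\le 0$, and the nonlinearity of $\Delta_\infty$ forbids simply adding a linear barrier to $v$, since one computes only $-\Delta_\infty(v+a\rho)\ge -2a$. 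The way I would close the gap is a maximum-propagation argument: if $\theta:=\sup_U(u-v)>0$ were attained at an interior $x^\ast$, the two mean-value identities force a neighbour simultaneously realizing $\inf_{y\sim x^\ast}u$ and $\inf_{y\sim x^\ast}v$ (and another realizing the suprema) to again satisfy $u-v=\theta$, so the contact set $\{u-v=\theta\}$ contains a path that can never meet $\delta U$ and must therefore escape to infinity. \emph{The main obstacle} is ruling out this escape — showing that under the sublinear normalization the propagating maximum cannot run off to infinity, and treating the non-attained case by approximating $\theta$ along $\rho\to\infty$. This is the discrete counterpart of the Crandall--Gunnarsson--Wang comparison (\Cref{CGW results}), and I expect the resulting comparison principle for sublinear sub/supersolutions on infinite-width graphs to be the technical heart of the proof, with the sublinear growth condition entering precisely to preclude the escape.
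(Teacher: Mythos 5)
Your existence half is essentially the paper's exhaustion argument, and your barrier computation $w\le a\rho+\sup_{\delta U}w$ is a correct, pleasantly elementary proof that any sublinear solution lies between $\inf_{\delta U}g$ and $\sup_{\delta U}g$ (which settles the ``moreover'' clause). But the uniqueness half --- the actual heart of the theorem --- is left open: you stop precisely at the obstacle you name, ruling out escape of the propagating maximum to infinity, and this obstacle is real. Sublinearity of $u$ and $v$ alone cannot preclude the escape, because along the contact path the increments of $u$ may decay to zero, so the path can wander to infinity without forcing linear growth; moreover, on the graphs considered here the suprema and infima over neighbours need not be attained, so the propagation step itself is not available as stated. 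The paper's missing idea is an $\varepsilon$-steepening surgery (\Cref{prop:uvarepsilon}): given infinity subharmonic $u$, replace it on each connected component $K$ of the flat set $O_\varepsilon=\{x\in U:\,L(u,x)\le\varepsilon\}$ by $w_\varepsilon(x)=\sup_{y\in\delta K}\bigl(u(y)-\varepsilon d_K(x,y)\bigr)$; the resulting $u_\varepsilon$ is still infinity subharmonic, satisfies $u_\varepsilon\le u$ and $u_\varepsilon\to u$ pointwise as $\varepsilon\to0^+$, and crucially has $L(u_\varepsilon,x)\ge\varepsilon$ everywhere on $U$. After this surgery the marching-path argument becomes quantitative: an almost-steepest-ascent path gains at least $\varepsilon-\varepsilon_0>0$ per step, so sublinearity forces it to hit $\delta U$ in finitely many steps, yielding the linear decay $u_\varepsilon(x)\le\sup_{\delta U}u-\varepsilon|x|$ (\Cref{lem:compare_for_uvarepsilon}). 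This is exactly what forbids the escape: $u_\varepsilon\le v$ outside some $B_{r_\varepsilon}(\delta U)$, the finite-width comparison \Cref{thm:compare_result_for_inhomogeneous_equations_on_bounded_width_graphs} applies on the remaining region, and letting $\varepsilon\to0$ gives \Cref{thm:compare_result_for_sublinear_on_graph}, hence uniqueness.

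A secondary but genuine issue: your existence proof quietly adds hypotheses the theorem does not have. Diagonal extraction needs a countable vertex set, and interchanging the pointwise limit with $\sup_{y\sim x}$ and $\inf_{y\sim x}$ needs local finiteness --- assumptions absent from the paper, and violated in its main application, where \Cref{thm:main_theorem_1} is invoked in the proof of \Cref{thm:existence_result_of_continuous_case} on the graphs $G_{\varepsilon_i}=(\overline{\Omega},E)$, whose vertices are uncountable and of infinite degree. The paper sidesteps this by prescribing $\sup g$ (rather than $0$) on the outer sphere $S_{r+1}(\delta U)$: the finite-width comparison then makes the family $u^r$ monotone decreasing in $r$, so the full sequence converges pointwise with no extraction at all. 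So: replace your outer datum by $\sup g$ to get monotonicity, and supply the $u_\varepsilon$ construction and the linear-decay lemma for uniqueness; without the latter the proof does not close.
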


The existence of the solution is guaranteed by \Cref{thm:existence_and_uniquness_result_for_inhomogeneous_equations_on_bounded_width_graphs} and a exhaustion method, while its uniqueness follows directly from the following comparison result.

\begin{theorem}\label{thm:compare_result_for_sublinear_on_graph}
    Let $u,v \in C^{\infty}(V)$ satisfying:
    \begin{itemize}
        \item[(i)] $-\Delta_{\infty}v \geq 0 \geq -\Delta_{\infty}u$ on $U$;
        \item[(ii)] $\liminf\limits_{r \to \infty}\frac{\inf\limits_{d(y, \delta U)\leq r}v(y)}{r}\geq 0 \geq \limsup\limits_{r \to \infty}\frac{\sup\limits_{d(y,\delta U)\leq r}u(y)}{r}$.
    \end{itemize} 
    Then 
    \begin{equation}
        \sup\limits_{U}(u-v) \leq \sup\limits_{\delta U}(u-v).
    \end{equation}
\end{theorem}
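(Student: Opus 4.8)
The plan is to reduce to a one-sided statement and then combine the finite-width comparison principle (\Cref{thm:compare_result_for_inhomogeneous_equations_on_bounded_width_graphs}) with an exhaustion of $U$ by the finite-width domains $\Omega_R:=\{x\in U: d(x,\delta U)\le R\}$, using the distance function as a barrier. First I would set $M:=\sup_{\delta U}(u-v)$, which we may assume finite (otherwise there is nothing to prove), and replace $v$ by $v+M$; since adding a constant does not change $-\Delta_{\infty}$, it suffices to prove that $u\le v$ on $\delta U$ forces $u\le v$ on $U$.

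The key barrier is $\rho(x):=d(x,\delta U)$. For $x\in U$ there is always a neighbour lying on a shortest path to $\delta U$, so $\inf_{y\sim x}\rho(y)=\rho(x)-1$, while $\sup_{y\sim x}\rho(y)\le\rho(x)+1$; hence $-\Delta_{\infty}\rho(x)=\rho(x)+1-\sup_{y\sim x}\rho(y)\ge 0$, i.e. $\rho$ is a supersolution, and by positive homogeneity so is $\lambda\rho$ for every $\lambda\ge 0$. For $\epsilon>0$ consider $w_\epsilon:=u-v-\epsilon\rho$. On the shell $\{d=r\}$ hypothesis (ii) gives $u-v\le \sup_{d\le r}u-\inf_{d\le r}v$, which is $o(r)$, so $w_\epsilon\le o(r)-\epsilon r\to-\infty$; thus $\{w_\epsilon\ge 0\}$ is contained in a slab $\{\rho\le R(\epsilon)\}$, while $w_\epsilon\le 0$ on $\delta U$. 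The aim is to show $\sup_U w_\epsilon\le 0$ and then let $\epsilon\to 0$.

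On the finite-width domain $\Omega_R$ with $R\ge R(\epsilon)$, the outer boundary lies in $\delta U\cup\{d=R+1\}$, where $w_\epsilon\le 0$; so \emph{if} $v+\epsilon\rho$ were an exact supersolution, \Cref{thm:compare_result_for_inhomogeneous_equations_on_bounded_width_graphs} (with $f=0$) would give $\sup_{\Omega_R}w_\epsilon\le 0$, and letting $R\to\infty$ and then $\epsilon\to 0$ would finish the proof. The main obstacle is exactly that $\Delta_{\infty}$ is nonlinear, so $v+\epsilon\rho$ is only an approximate supersolution: one computes $-\Delta_{\infty}(v+\epsilon\rho)\ge -2\epsilon$, and this $O(\epsilon)$ defect does not obviously disappear after truncating at the $\epsilon$-dependent radius $R(\epsilon)\to\infty$.

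To overcome this I would argue by a discrete comparison-with-cones analysis rather than quote the homogeneous comparison verbatim. Suppose $w_\epsilon$ had a positive interior maximum at $x_0$; writing the subsolution inequality for $u$ and the supersolution inequality for $v$ at $x_0$ and using $\inf_{y\sim x_0}\rho(y)=\rho(x_0)-1$, one finds the two defining inequalities are forced to be nearly tight, which lets one pick an outward neighbour $x_1$ with $\rho(x_1)=\rho(x_0)+1$ along which $w_\epsilon$ does not decrease. Iterating produces a path escaping to $\rho\to\infty$ on which $w_\epsilon$ stays $\ge w_\epsilon(x_0)>0$, contradicting $w_\epsilon\to-\infty$. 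Equivalently, one may establish a shell maximum principle for $A_r:=\sup_{\{d=r\}}(u-v)$ by applying the finite-width comparison on annuli $\{r_1\le d\le r_2\}$, and then use the genuine cone supersolutions $\lambda\rho$ to prevent $A_r$ from growing. I expect this propagation step—turning the sublinear bound into a contradiction while absorbing the nonlinear defect—to be the crux; once it is in place, the exhaustion $R\to\infty$ followed by $\epsilon\to 0$ yields $u\le v$ on $U$, which is exactly $\sup_U(u-v)\le\sup_{\delta U}(u-v)$.
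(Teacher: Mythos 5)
Your outer skeleton does match the paper's: force the subsolution side to decay linearly in $|x|=d(x,\delta U)$, conclude via the finite-width comparison \Cref{thm:compare_result_for_inhomogeneous_equations_on_bounded_width_graphs}, and then let the perturbation parameter go to zero. But the crux --- manufacturing an \emph{exact} sub- or supersolution with slope of order $\epsilon$ --- is precisely where your proposal has a genuine gap. Plan A fails for the reason you yourself state ($v+\epsilon\rho$ is only a supersolution up to a $2\epsilon$ defect). Plan B, the claim that at a positive interior maximum $x_0$ of $w_\epsilon=u-v-\epsilon\rho$ the near-tightness of the two inequalities yields an outward neighbour along which $w_\epsilon$ does not decrease, is false. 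Local counterexample: give $x_0$ one downward neighbour $y_0$ and two outward neighbours $y,z$, and set $u(x_0)=v(x_0)=0$, $u(y)=v(y)=1$, $u(z)=v(z)=-1$, $u(y_0)=-1$, $v(y_0)=0$. Then $\Delta_{\infty}u(x_0)=\Delta_{\infty}v(x_0)=0$ (so $u$ is infinity subharmonic and $v$ infinity superharmonic at $x_0$), while
\begin{equation*}
 w_\epsilon(y)-w_\epsilon(x_0)=-\epsilon,\qquad w_\epsilon(z)-w_\epsilon(x_0)=-\epsilon,\qquad w_\epsilon(y_0)-w_\epsilon(x_0)=-1+\epsilon,
\end{equation*}
so $w_\epsilon$ has a \emph{strict} local maximum at $x_0$; since your propagation step is purely local at the maximum point, this already refutes it. The reason is structural: the tightness transfers through \emph{different} neighbours (the rise of $u$ at $y$ forces a rise of $v$ there; the drop of $v$ at $z$ forces a drop of $u$ there), and the $\pm\epsilon$ slack from $\rho$ in each transfer is exactly what permits a strict drop of size $\epsilon$ in every direction. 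A per-step loss of order $\epsilon$ is not summable, so working with almost-maximizers cannot repair the iteration (note also that on the infinite slab $\{\rho\le R(\epsilon)\}$ the supremum need not be attained at all, and the graph may have infinite degree). The ``shell maximum principle'' fallback is stated as a hope, not an argument.

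The paper's resolution is genuinely different at this point and worth absorbing: instead of the linear perturbation $\epsilon\rho$, which the nonlinearity of $\Delta_{\infty}$ destroys, it performs surgery on $u$ itself. On the low-slope set $O_{\varepsilon}=\{x\in U: L(u,x)\le\varepsilon\}$ it replaces $u$, component by component, by the cone function $\sup_{y\in\delta K}\bigl(u(y)-\varepsilon d_K(\cdot,y)\bigr)$; \Cref{prop:uvarepsilon} shows the resulting $u_{\varepsilon}\le u$ is an \emph{exact} infinity subharmonic function with $L(u_{\varepsilon},\cdot)\ge\varepsilon$ on $U$ and $u_{\varepsilon}\to u$ pointwise. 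Lemma \ref{lem:compare_for_uvarepsilon} then runs a near-steepest-ascent path: by \Cref{prop:conditions_for_subharmonic}, each step gains at least $\varepsilon-\varepsilon_0$, and the sublinearity of $u$ forces the path to hit $\delta U$, giving the exact linear decay $u_{\varepsilon}(x)\le\sup_{\delta U}u-\varepsilon|x|$. Hypothesis (ii) on $v$ then yields $u_{\varepsilon}\le v$ outside a finite-width set, \Cref{thm:compare_result_for_inhomogeneous_equations_on_bounded_width_graphs} applies there, and letting $\varepsilon\to0$ concludes. Observe that the flat spots $L(u,x)=0$, which your steepest-ascent and propagation ideas never address, are exactly the obstruction that the $O_{\varepsilon}$-surgery is built to remove; without some such device, no marching or propagation argument on $u$ (or on $u-v-\epsilon\rho$) can get started.
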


As an application of the two theorems above, we prove the existence and uniqueness of sublinear solutions to the homogeneous (normalized) infinity Laplace equations on unbounded Euclidean domains.

\begin{theorem}\label{thm:comparison_result_of_continuous_case}
    Let $\Omega \subset \R^N$ be an unbounded domain with boundary $\partial \Omega$, $u,-v$ be infinity subharmonic on $\Omega$, uniformly continuous and bounded on $\partial \Omega$, and satisfy 
    \begin{equation*}
        \limsup\limits_{r\to \infty}\frac{\sup\limits_{d_{\overline{\Omega}}(y, \partial \Omega) \leq r}u(y)}{r}\leq 0 \leq \liminf\limits_{r\to \infty}\frac{\inf\limits_{d_{\overline{\Omega}}(y, \partial \Omega) \leq r}v(y)}{r}.
    \end{equation*}
    Then 
    \begin{equation}
        u(x)-v(x) \leq \sup_{\partial \Omega}(u-v), \ \forall \ x\in \Omega.
    \end{equation}
\end{theorem}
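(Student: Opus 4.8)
The plan is to establish \Cref{thm:comparison_result_of_continuous_case} as the continuum counterpart of the discrete comparison principle \Cref{thm:compare_result_for_sublinear_on_graph}, in the same spirit that \Cref{CGW results} is its bounded-boundary model. Two features of the present setting force modifications: the boundary may be unbounded, which I handle by confining with the intrinsic distance $d_{\overline\Omega}(\cdot,\partial\Omega)$ rather than with $|x|$; and only sublinear growth is available, which turns out to be exactly enough to localize the argument to a finite-width region. Writing $M:=\sup_{\partial\Omega}(u-v)$, which we may assume finite, the goal is $u-v\le M$ throughout $\Omega$.

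The first step is a penalization that exploits the growth hypothesis. For $\eta>0$ set $\Phi_\eta:=u-v-\eta\, d_{\overline\Omega}(\cdot,\partial\Omega)$. On the level set $\{d_{\overline\Omega}(\cdot,\partial\Omega)=r\}$ one has $u\le \sup_{d_{\overline\Omega}(\cdot,\partial\Omega)\le r}u$ and $v\ge \inf_{d_{\overline\Omega}(\cdot,\partial\Omega)\le r}v$, so the two one-sided bounds combine to $u-v\le o(r)$ as $r\to\infty$; hence $\Phi_\eta\to-\infty$ as $d_{\overline\Omega}(\cdot,\partial\Omega)\to\infty$, and $\sup_\Omega\Phi_\eta$ is finite and is approached only inside a finite-width set $\{d_{\overline\Omega}(\cdot,\partial\Omega)\le R_\eta\}$. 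If this supremum does not exceed $M$, then $u-v\le M+\eta\, d_{\overline\Omega}(\cdot,\partial\Omega)$ and letting $\eta\to0$ concludes; otherwise, since $\Phi_\eta=u-v\le M$ on $\partial\Omega$, any near-maximizer $\bar x_\eta$ lies in the interior $\Omega$, and by the uniform continuity of $u,v$ on $\partial\Omega$ it stays a definite intrinsic distance away from $\partial\Omega$. The whole analysis thus localizes to an interior maximizer in a finite-width region.

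For this interior case I would invoke a comparison principle on the confined finite-width region, and this is where the discrete machinery enters. The comparison for the normalized infinity Laplacian on finite-width Euclidean domains arises as the $\varepsilon\to0$ limit of the discrete comparison \Cref{thm:compare_result_for_inhomogeneous_equations_on_bounded_width_graphs} (with existence supplied by \Cref{thm:existence_and_uniquness_result_for_inhomogeneous_equations_on_bounded_width_graphs} and \Cref{thm:main_theorem_1}), transported through the convergence of the $\varepsilon$-tug-of-war along the graphs $G_\varepsilon$ built from the intrinsic metric on $\overline\Omega$, exactly in the Arzel\`a--Ascoli spirit of Han--Wang and of Armstrong--Smart. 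Applying it to $u$ and $v$ on $\{d_{\overline\Omega}(\cdot,\partial\Omega)<R\}$, whose boundary splits into $\partial\Omega$ (where $u-v\le M$) and the inner face $\{d_{\overline\Omega}(\cdot,\partial\Omega)=R\}$ (discounted by the penalty and the sublinear growth as $R\to\infty$, $\eta\to0$), should deliver $u-v\le M$ at the point under consideration. The uniform continuity and boundedness of $u,v$ on $\partial\Omega$ are used throughout to pass the boundary supremum to the limit over the collar created by the discretization.

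The main obstacle is precisely this transfer from the graph to the continuum. It is tempting to discretize $u,v$ on $G_\varepsilon$ and simply quote \Cref{thm:compare_result_for_sublinear_on_graph}, but that fails: a continuous infinity-subharmonic function need not satisfy $\Delta_\infty^\varepsilon u\ge0$ at any fixed scale, since at a point where it is not $C^2$ (for instance Aronsson's function $x^{4/3}-y^{4/3}$, which together with its negative is infinity harmonic, yet has $\Delta_\infty^\varepsilon<0$ at its non-$C^2$ points) the discrete operator picks up a negative contribution, and a quantitative ``almost-subsolution'' version accumulates an error of the order of the defect times the square of the width, which does not vanish in the limit. Consequently the comparison must be run at the level of the \emph{exact} discrete solutions, to which \Cref{thm:main_theorem_1} and \Cref{thm:compare_result_for_sublinear_on_graph} genuinely apply, and only then transported by the tug-of-war convergence. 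The delicate uniform estimates live exactly here: uniformly in $\varepsilon$ near the unbounded boundary (via uniform continuity) and at infinity (via the sublinear growth dominating the penalty $\eta\, d_{\overline\Omega}(\cdot,\partial\Omega)$), keeping in mind that $d_{\overline\Omega}(\cdot,\partial\Omega)$ is only Lipschitz and an infinity-superharmonic viscosity supersolution, so that the penalty interacts correctly with the nonlinear operator.
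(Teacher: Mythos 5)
There is a genuine gap at the heart of your plan. You correctly identify the obstacle (a continuum infinity-subharmonic function need not satisfy $\Delta_{\infty}^{\varepsilon}u\ge 0$ at any fixed scale, so one cannot discretize $u,v$ naively and quote \Cref{thm:compare_result_for_sublinear_on_graph}), but your proposed repair does not close. Convergence of the exact discrete solutions along $G_{\varepsilon}$ (the tug-of-war/Arzel\`a--Ascoli machinery, as in the proof of \Cref{thm:existence_result_of_continuous_case}) yields \emph{existence} of a continuum solution; it gives no mechanism for comparing the \emph{given} functions $u$ and $v$ with those discrete solutions. To conclude, say, $u\le w$ where $w$ is the limit of the discrete solutions with boundary data $u|_{\partial\Omega}$, you would need a continuum comparison between an infinity-subsolution and a solution --- which is a special case of the very theorem being proved, so the step ``run the comparison at the level of the exact discrete solutions, then transport by tug-of-war convergence'' is circular exactly where all the content lies. (The preliminary penalization by $\eta\, d_{\overline{\Omega}}(\cdot,\partial\Omega)$ is harmless but also unnecessary: \Cref{thm:compare_result_for_sublinear_on_graph} is already stated for infinite-width regions under the sublinear growth hypothesis, so no localization to a finite-width set is needed.)

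The missing idea is the sup/inf-convolution, i.e.\ Armstrong's lemma (\Cref{lem:armstrong_result}, from \cite{armstrong2010easy}): if $u$ is infinity subharmonic on $\Omega$, then $u^{\varepsilon}(x)=\max_{\overline{B}_{\varepsilon}(x)}u$ is an \emph{exact} discrete subsolution, $\Delta_{\infty}^{\varepsilon}u^{\varepsilon}\ge 0$ on $\Omega_{2\varepsilon}$, and symmetrically $\Delta_{\infty}^{\varepsilon}v_{\varepsilon}\le 0$. Your ``tempting but fails'' paragraph rejects a strawman: the paper never discretizes $u$ itself, and the error you fear is absorbed entirely by passing to $u^{\varepsilon}$, $v_{\varepsilon}$, which moreover inherit the sublinear growth condition from $u$ and $v$. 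With that lemma in hand the paper's proof is two lines: apply \Cref{thm:compare_result_for_sublinear_on_graph} on $G_{\varepsilon}$ with boundary collar $\Omega_{\varepsilon}\setminus\Omega_{2\varepsilon}$ to get $u^{\varepsilon}(x)-v_{\varepsilon}(x)\le \sup_{\Omega_{\varepsilon}\setminus\Omega_{2\varepsilon}}\bigl(u^{\varepsilon}-v_{\varepsilon}\bigr)$ for $x\in\Omega_{2\varepsilon}$, then let $\varepsilon\to 0$, the uniform continuity and boundedness of $u,v$ near $\partial\Omega$ turning the collar supremum into $\sup_{\partial\Omega}(u-v)$. Your closing observation that $d_{\overline{\Omega}}(\cdot,\partial\Omega)$ is only a Lipschitz supersolution is then moot, since no penalization is performed.
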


\begin{theorem}\label{thm:existence_result_of_continuous_case}
    Let $\Omega \subset \R^N$ be an unbounded domain with boundary $\partial \Omega$, $g$ be a bounded Lipschitz function on $\partial \Omega$. Then the equation 
    \begin{align}\label{equ:homogeneous_equation_on_unbounded_domain}
        \begin{cases}
            -\Delta_{\infty}u(x)=0, \quad &x\in \Omega;\\
            u(x) = g(x), &x\in \partial \Omega;
        \end{cases}
    \end{align}
    admits a unique uniformly continuous solution satisfying 
    \begin{align*}
        \limsup\limits_{r\to \infty}\frac{\sup\limits_{d_{\overline{\Omega}}(y, \partial\Omega) \leq  r}|u(y)|}{r}=0.
    \end{align*}
\end{theorem}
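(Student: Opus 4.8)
The plan is to obtain existence through the $\varepsilon$-tug-of-war discretization followed by a compactness argument, and to read off uniqueness directly from the comparison principle in \Cref{thm:comparison_result_of_continuous_case}. Fix $\varepsilon>0$ and form the graph $G_\varepsilon=(V,E)$ with $V=\overline{\Omega}$ and $x\sim y$ if and only if $d_{\overline{\Omega}}(x,y)<\varepsilon$, taking $U=\Omega$ and $\delta U=\partial\Omega$. Applying \Cref{thm:main_theorem_1} (or, in the case that $\Omega$ has finite intrinsic width, so that $U$ has finite combinatorial width in $G_\varepsilon$, \Cref{thm:existence_and_uniquness_result_for_inhomogeneous_equations_on_bounded_width_graphs}) to the homogeneous discrete equation with boundary datum $g$, I obtain for each $\varepsilon$ a unique bounded discrete $\infty$-harmonic function $u_\varepsilon$ on $\overline{\Omega}$ with $u_\varepsilon=g$ on $\partial\Omega$.

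First I would establish two $\varepsilon$-uniform estimates. Since constants are discrete $\infty$-harmonic, comparing $u_\varepsilon$ with the constants $\pm\|g\|_{\infty}$ via \Cref{thm:compare_result_for_sublinear_on_graph} (respectively \Cref{thm:compare_result_for_inhomogeneous_equations_on_bounded_width_graphs}) gives the uniform bound $\|u_\varepsilon\|_{\infty}\le\|g\|_{\infty}$. For equicontinuity I would exploit that $g$ is $L$-Lipschitz with respect to $d_{\overline{\Omega}}$: the McShane-type extensions $\overline{G}(x)=\sup_{z\in\partial\Omega}\big(g(z)-L\,d_{\overline{\Omega}}(x,z)\big)$ and $\underline{G}(x)=\inf_{z\in\partial\Omega}\big(g(z)+L\,d_{\overline{\Omega}}(x,z)\big)$ are $L$-Lipschitz, agree with $g$ on $\partial\Omega$, and serve as discrete super- and sub-solutions (cones being discrete $\infty$-super/sub-harmonic away from their vertices), so comparison sandwiches $u_\varepsilon$ between them and yields an $\varepsilon$-uniform modulus-of-continuity estimate of the form $|u_\varepsilon(x)-u_\varepsilon(y)|\le L\,d_{\overline{\Omega}}(x,y)+o(1)$. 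With a uniform bound and a uniform modulus in hand, an Arzel\`a--Ascoli argument along an exhaustion of $\overline{\Omega}$ by compacta together with a diagonal extraction produces a subsequence $u_{\varepsilon_k}\to u$ locally uniformly, where $u$ is bounded by $\|g\|_{\infty}$ and inherits the uniform modulus, hence is uniformly continuous on $\overline{\Omega}$.

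Next I would verify that $u$ solves the continuous problem. Boundary attainment is immediate: $u_\varepsilon=g$ on $\partial\Omega$ and the Lipschitz estimate forces $u=g$ on $\partial\Omega$ with continuity up to $\partial\Omega$. That $u$ is a viscosity solution of $-\Delta_\infty u=0$ is the standard consistency argument: if a smooth $\phi$ touches $u$ from above at an interior point $x_0$ with $\nabla\phi(x_0)\neq0$, then localizing the maximum of $u_\varepsilon-\phi$, using $\Delta_\infty^{\varepsilon}u_\varepsilon=0$ to get $0\le \Delta_\infty^{\varepsilon}\phi(x_\varepsilon)$, dividing by $\varepsilon^2$, and passing to the limit through the expansion $\varepsilon^{-2}\Delta_\infty^{\varepsilon}\phi\to\Delta_\infty\phi$ yields $\Delta_\infty\phi(x_0)\ge0$; the degenerate case $\nabla\phi(x_0)=0$ is handled as usual, and the supersolution inequality is symmetric. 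The sublinear growth in the conclusion is then automatic because $u$ is bounded, so $\limsup_{r\to\infty}r^{-1}\sup_{d_{\overline{\Omega}}(y,\partial\Omega)\le r}|u(y)|=0$ trivially. Finally, uniqueness follows by applying \Cref{thm:comparison_result_of_continuous_case} to any two such solutions $u_1,u_2$ in both orders, both being bounded (hence satisfying the growth hypothesis) and equal to $g$ on $\partial\Omega$; this forces $u_1\equiv u_2$, and in particular upgrades the subsequential convergence above to convergence of the full family $u_\varepsilon$.

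The main obstacle is the $\varepsilon$-uniform equicontinuity estimate on the unbounded domain: unlike the bounded setting treated by Peres et al. and Armstrong--Smart, here $\overline{\Omega}$ is noncompact, so one cannot rely on global compactness, and the cone/McShane comparison must be performed intrinsically in $d_{\overline{\Omega}}$ and made uniform in $\varepsilon$ before diagonalizing. Once this estimate and the uniform sup bound are secured, the decisive simplification is that boundedness renders the sublinear growth condition vacuous, so no delicate tracking of growth rates through the limit $\varepsilon\to0$ is required.
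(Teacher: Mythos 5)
Your proposal follows essentially the same route as the paper: discretize on the graphs $G_{\varepsilon}$, invoke \Cref{thm:main_theorem_1} for the discrete solutions, extract a locally uniform limit via an $\varepsilon$-uniform bound and modulus of continuity plus Arzel\`a--Ascoli (the step the paper outsources to the proof of Theorem~1.3 in \cite{han2025discrete}, where the cone/McShane barrier argument you sketch is carried out), verify the limit is a viscosity solution by the standard consistency argument (the paper cites \cite[Theorem~2.11]{armstrong2012finite}), and deduce uniqueness from \Cref{thm:comparison_result_of_continuous_case}. The only substantive detail you fill in yourself --- and correctly flag as the delicate point --- is the uniform equicontinuity on the noncompact domain, which matches the cited machinery rather than deviating from it.
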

\begin{remark}
    Several known results can be derived as corollaries of \Cref{thm:comparison_result_of_continuous_case} and \Cref{thm:existence_result_of_continuous_case}. See \cite{crandall2007uniqueness} Theorem 3.2, 4.1 and 4.2.
\end{remark}

We then consider inhomogeneous equations on trees with vertex set $U \sqcup \delta U$, where $\delta U$ serves as the boundary. We say that a tree has bounded boundary if $\sup\limits_{x, y \in \delta U}d(x, y) < +\infty$. Assume that $f \geq 0$, then we have the following uniqueness theorem.
\begin{theorem}\label{thm:inhomogeneous_equations_general_case}
    Let $T$ be a tree with bounded boundary $\delta U$, $g\in L^{\infty}(\delta U)$, and let $f\in C(U)$ be nonnegetive. Then the solution to the following equation
    \begin{align*}
        \begin{cases}
            \Delta_{\infty}u(x)=f(x), \ &x\in U = V\setminus \delta U;\\
            u(x)=g(x), &x \in \delta U;
        \end{cases}
    \end{align*} 
    that satisfies
    \begin{equation*}
        \limsup\limits_{r\to \infty}\frac{\sup\limits_{y \in B_r(\delta U)}|u(y)|}{r}=0
    \end{equation*}
    is unique.
\end{theorem}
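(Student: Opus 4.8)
The plan is to prove uniqueness via a comparison principle, reducing the inhomogeneous problem on the tree to the homogeneous machinery already developed. Suppose $u$ and $v$ are two solutions, both satisfying the sublinear growth condition. I would aim to show $u \leq v$ (and by symmetry $v \leq u$), which forces $u = v$. The key structural feature I would exploit is that $T$ is a tree with bounded boundary: every vertex $x \in U$ has a well-defined ``distance to the boundary'' $d(x,\delta U)$, and because the boundary is bounded, the tree essentially looks like it grows outward from a finite core. On a tree, each non-leaf interior vertex $x$ has neighbors that split into those closer to the boundary and those farther away, which gives strong control on where the infimum and supremum in $\Delta_\infty u(x)$ are attained.

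First I would set $w = u - v$ and try to establish that $\sup_U w \leq \sup_{\delta U} w = 0$ (since $u = v = g$ on $\delta U$). The natural obstruction is that $w$ need not be infinity-subharmonic in any clean sense because the equation is inhomogeneous and $\Delta_\infty$ is nonlinear; subtracting two solutions of $\Delta_\infty u = f$ does not yield a solution of $\Delta_\infty w = 0$. To get around this, I would instead argue directly with the hypothesis $f \geq 0$. Since $\Delta_\infty u = f \geq 0$, $u$ is itself infinity-subharmonic (a subsolution of the homogeneous equation), i.e. $-\Delta_\infty u \leq 0$ on $U$; similarly $-\Delta_\infty v \leq 0$. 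The difficulty is that I need one function to be a supersolution to invoke a comparison. The idea is to leverage the tree structure: on a tree one can construct explicit barrier/comparison functions adapted to the branching, and the nonnegativity of $f$ means solutions are ``spread out'' in a controlled, monotone fashion along geodesics.

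The core step I would carry out is a perturbation-and-contradiction argument. Assume for contradiction that $\sup_U(u - v) = M > 0$. Using the sublinear growth of both $u$ and $v$, I would show the supremum of $u-v$ is either attained at some finite vertex or approached along a sequence escaping to infinity; the growth condition
\begin{equation*}
    \limsup_{r\to\infty}\frac{\sup_{y\in B_r(\delta U)}|u(y)|}{r}=0
\end{equation*}
rules out the escaping case by a linear-barrier comparison exactly as in \Cref{thm:compare_result_for_sublinear_on_graph}. At an interior maximizer $x_0$ of $w = u-v$, I would examine the equation at $x_0$: since $w(y) \leq w(x_0)$ for all $y \sim x_0$, one has $u(y) - u(x_0) \leq v(y) - v(x_0)$, and combining the two equations $\Delta_\infty u(x_0) = f(x_0) = \Delta_\infty v(x_0)$ with the ordering of neighbor-values should force $w$ to be locally constant, propagating the maximum outward to $\delta U$ along a geodesic in the tree. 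This propagation is where the tree hypothesis is essential: the absence of cycles lets the ``maximum-spreading'' reach the (bounded) boundary without returning, yielding $M \leq \sup_{\delta U} w = 0$, a contradiction.

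The main obstacle I anticipate is the rigorous handling of the case where the supremum is not attained at any finite vertex, together with making the maximum-propagation argument airtight when the infimum/supremum in $\Delta_\infty$ are attained at possibly different neighbors for $u$ and $v$. Concretely, at $x_0$ the relation $\inf_{y\sim x_0} u(y) + \sup_{y\sim x_0} u(y) - 2u(x_0) = \inf_{y\sim x_0} v(y) + \sup_{y\sim x_0} v(y) - 2v(x_0)$ must be combined with $u(y)-v(y) \leq u(x_0)-v(x_0)$ to conclude equality is forced along a neighbor; ensuring the chosen neighbor lies strictly closer to (or reaches) the boundary — so that the induction on $d(x,\delta U)$ terminates — is the delicate point. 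I would resolve it by orienting the propagation using the tree's geodesic structure and the boundedness of $\delta U$, and by appealing to the already-established sublinear comparison (\Cref{thm:compare_result_for_sublinear_on_graph}) to dispose of the behavior at infinity.
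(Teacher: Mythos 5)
There is a genuine gap, and it sits exactly where you flagged the difficulty: the behavior at infinity. Your plan leans on \Cref{thm:compare_result_for_sublinear_on_graph} to dispose of the escaping case, but that theorem requires one of the two functions to be infinity \emph{super}harmonic, whereas here $f \geq 0$ makes \emph{both} $u$ and $v$ infinity subharmonic; the difference $w = u - v$ satisfies no equation, and the $u_{\varepsilon}$-perturbation machinery of Section \ref{sec3} (which produces the linear decay $u_{\varepsilon}(x) \leq \sup_{\delta U} u - \varepsilon|x|$ via marching paths with $L(u_{\varepsilon},\cdot) \geq \varepsilon$) has no analogue for $w$. Worse, even if the local step works, the maximum-propagation cannot be made to terminate: on a tree the maximum of $w$ can propagate \emph{outward} along an infinite branch, and $w \equiv M > 0$ along such a branch is perfectly consistent with the sublinearity of both $u$ and $v$ (a constant is sublinear), so the growth hypothesis yields no contradiction in precisely the case you need to exclude. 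Two smaller but real issues: the step ``equality is forced along a neighbor'' needs the supremum/infimum over neighbors to be attained, and the paper does not assume local finiteness (cf.\ the example after \Cref{prop:conditions_for_subharmonic}, where a vertex has infinitely many neighbors); and nothing in your scheme controls whether the forced-equality neighbor lies closer to $\delta U$ or farther from it.

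The paper's actual mechanism is different and is, in effect, what you would have to prove anyway: a rigidity lemma (\Cref{lem:existence_and_uniqueness_result_for_inhomogeneous_equations_on_a_rooted_tree}) showing that \emph{any} sublinear solution satisfies $u(x) = u(x^{par}) - \sup_{P \in \mathcal{P}_x}\sum_{P} f$ for all $|x| \geq M+1$, where $M = \sup_{x, y \in \delta U} d(x, y)$. Its proof is where sublinearity and $f \geq 0$ actually bite: if ever $u(x) > u(x^{par})$, the equation forces $u$ to grow linearly along a path, contradicting sublinearity; then increments along downward paths tend to zero, and summing the equation along near-extremal downward paths gives the identity. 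This identity makes $u - v$ constant along every downward edge outside the finite-width core $\{x : |x| \leq M+1\}$, so uniqueness reduces to that core, where the restriction of $u$ solves a modified equation with nonnegative right-hand side (with $\Delta_{\infty}u(x) = 2\sup_{P\in\mathcal{P}_x}\sum_P f$ on the shell $|x| = M+1$) on a finite-width graph, and \Cref{thm:existence_and_uniquness_result_for_inhomogeneous_equations_on_bounded_width_graphs} applies. Your comparison-at-a-maximizer scheme never extracts this quantitative structure along downward paths, which is the only route through which the sublinearity hypothesis enters; without it, the argument stalls at the propagating-branch scenario described above.
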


We provide a necessary condition for the existence of a sublinear solution. See Remark \ref{rmk:necessray_condition_for_sublinear_solution} for details.

The rest of this paper is organized as follows. In Section \ref{sec2}, we introduce some basic notions and definitions. In particular, we provide two equivalent characterizations for infinity subharmonic functions. In Section \ref{sec3}, we discuss homogeneous equations and prove \Cref{thm:main_theorem_1} and \Cref{thm:compare_result_for_sublinear_on_graph}. By combining these theorems and some known results, we then prove \Cref{thm:comparison_result_of_continuous_case} and \Cref{thm:existence_result_of_continuous_case}. In Section \ref{sec4}, we discuss inhomogeneous equations on trees and prove \Cref{thm:inhomogeneous_equations_general_case}.

\section{Preliminaries}\label{sec2}
Let $G = (V, E)$ be a graph. For any $x, y \in V$, we define the \textit{combinatorial distance} between $x$ and $y$ by 
\[
 d(x, y):= \inf\{n: x = x_0 \sim x_1 \sim \cdots \sim x_n = y\},
\]
that is, the length of a shortest path connecting $x$ and $y$. For any $x \in V$, we write the following:
\begin{itemize}
 \item $B_r(x):= \{y \in V: d(x, y) \leq r\}$, which is called the closed $r$-ball centered at $x$;
 \item $S_r(x):= \{y \in V: d(x, y) = r\}$, which is called the $r$-sphere centered at $x$.
\end{itemize}
For any subset $U \subseteq V$, denote the distance between $x$ and $U$ by 
\[
 d(x, U):= \inf\{n: x = x_0 \sim x_1 \sim \cdots \sim x_n \in U\}.
\]
We define the boundary of $U$ as 
\[
 \delta U:= \{y \notin U: \text{ there exists } x \in U \text{ such that } y \sim x\}.
\]
We write $\overline{U}:= U \cup \delta U$. We denote by $C(U)$ the set of functions on $U$. For any function $u$, set
    $$S^+u(x):=\sup_{y\sim x}(u(y)-u(x)),\ S^-u(x):=\sup_{y\sim x}(u(x)-u(y)),$$
and define $$L(u,x):=\max\{|S^+u(x)|,|S^-u(x)|\}=\sup_{y\sim x}|u(x)-u(y)|.$$

Given a function $u \in C(V)$, the discrete infinity Laplacian $\Delta_{\infty}$ is defined as 
$$\Delta_{\infty}u(x):=S^{+}u(x)-S^{-}u(x)=\sup_{y\sim x}u(y)+\inf_{y\sim x}u(y)-2u(x).$$
We say $u \in C(V)$ is infinity subharmonic if $\Delta_{\infty}u \geq 0$ on $V$, $u$ is infinity superharmonic if $\Delta_{\infty}u \leq 0$, and $u$ is infinity harmonic if $u$ is both infinity subharmonic and superharmonic. 
\begin{proposition}\label{prop:conditions_for_subharmonic}
 Let $U \subset V$ be a connected subset, $u \in C(\overline{U})$. The following conditions are equivalent:
 \begin{itemize}
  \item[(i)] $u$ is infinity subharmonic on $U$.
  \item[(ii)] For any $x \in U$, $L(u, x) = \sup\limits_{y \sim x}u(y) - u(x)$.
  \item[(iii)] For any $x \in U$ and any $r \in \mathbb{N}_+$ with $r \leq d(x, \delta U)$,
   \[
    L(u, x) \leq \frac{\sup\limits_{z \in B_r(x)}u(z) - u(x)}{r}.
   \]
 \end{itemize}
\end{proposition}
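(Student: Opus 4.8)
The plan is to establish $(i)\Leftrightarrow(ii)$ by a short algebraic computation and then $(ii)\Leftrightarrow(iii)$, the forward implication of which is the only substantial point. The observation underpinning everything is that
\begin{equation*}
 S^+u(x)+S^-u(x)=\sup_{y\sim x}u(y)-\inf_{y\sim x}u(y)\ge 0,
\end{equation*}
so at least one of $S^\pm u(x)$ is nonnegative. I would also record at the outset that for $x\in U$ every neighbor $y\sim x$ lies in $\overline U$ by the definition of $\delta U$, so $S^\pm u$, $L(u,\cdot)$ and $\Delta_\infty u$ are all well defined on $U$ from the values of $u\in C(\overline U)$.

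For $(i)\Leftrightarrow(ii)$ I would argue purely algebraically. Writing $a=S^+u(x)$ and $b=S^-u(x)$, subharmonicity means $a\ge b$, while $(ii)$ means $\max\{|a|,|b|\}=a$. If $a\ge b$, then combining this with $a\ge -b$ (which is exactly $a+b\ge 0$) gives $a\ge 0$ and $a\ge|b|$, hence $\max\{|a|,|b|\}=a$; conversely $\max\{|a|,|b|\}=a$ forces $a\ge|b|\ge b$, i.e. $\Delta_\infty u(x)\ge 0$. For $(iii)\Rightarrow(ii)$ I would simply specialize to $r=1$: since $\sup_{z\in B_1(x)}u(z)-u(x)=\max\{0,S^+u(x)\}$, condition $(iii)$ reads $L(u,x)\le\max\{0,S^+u(x)\}$; as $L(u,x)\ge|S^+u(x)|\ge 0$ always, this is possible only when $S^+u(x)\ge 0$ and $L(u,x)=S^+u(x)$, which is $(ii)$.

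The heart of the matter is $(ii)\Rightarrow(iii)$, which I would prove by a discrete \emph{steepest-ascent path}. Fix $x\in U$ and $r\le d(x,\delta U)$, and set $x_0=x$. Given $x_i$, choose a neighbor $x_{i+1}\sim x_i$ nearly realizing the ascent, $u(x_{i+1})-u(x_i)\ge S^+u(x_i)-\delta_i$, for prescribed small $\delta_i>0$ (the slack $\delta_i$ is needed only if the defining supremum is not attained; on locally finite graphs one takes $\delta_i=0$). An induction using the distance bound keeps the path inside $U$: since $d(x_i,\delta U)\ge r-i\ge 1$ for $i\le r-1$, each such $x_i$ lies in $U$, so $(ii)$ is available there, and $x_r\in B_r(x)\cap\overline U$. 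The crucial monotonicity is that subharmonicity at $x_{i+1}$ propagates steepness: because $x_i\sim x_{i+1}$ we have $S^-u(x_{i+1})\ge u(x_{i+1})-u(x_i)$, and $(ii)$ at $x_{i+1}$ gives $S^+u(x_{i+1})\ge S^-u(x_{i+1})$, so
\begin{equation*}
 S^+u(x_{i+1})\ \ge\ u(x_{i+1})-u(x_i)\ \ge\ S^+u(x_i)-\delta_i .
\end{equation*}
Iterating gives $S^+u(x_i)\ge S^+u(x_0)-\sum_{j<i}\delta_j$, and telescoping the increments yields $u(x_r)-u(x)=\sum_{i=0}^{r-1}\bigl(u(x_{i+1})-u(x_i)\bigr)\ge r\,S^+u(x)-\sum_{i=0}^{r-1}\sum_{j\le i}\delta_j$. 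Choosing the $\delta_j$ so small that the double sum is at most an arbitrary $\varepsilon>0$, and using $S^+u(x)=L(u,x)$ from $(ii)$, I get $\sup_{z\in B_r(x)}u(z)-u(x)\ge u(x_r)-u(x)\ge r\,L(u,x)-\varepsilon$; letting $\varepsilon\to 0$ gives $(iii)$.

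The main obstacle is exactly this slope-monotonicity step: one must recognize that moving in a direction of maximal ascent cannot decrease the one-sided slope $S^+u$, and that this is precisely what $S^+\ge S^-$ encodes once the previous increment is read off as a lower bound for $S^-$ at the new vertex. The only technical nuisance is the possible non-attainment of the suprema on non-locally-finite graphs, which is absorbed by the summable slack $\{\delta_j\}$ and the final passage to the limit.
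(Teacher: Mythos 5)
Your proof is correct and follows essentially the same route as the paper's: the same algebra identifying subharmonicity with $S^+u(x)\ge S^-u(x)$ for (i)$\Leftrightarrow$(ii), the specialization $r=1$ for (iii)$\Rightarrow$(ii), and a near-steepest-ascent path with summable slack whose increments are nearly nondecreasing for the main implication --- your slope-propagation step $S^+u(x_{i+1})\ge S^-u(x_{i+1})\ge u(x_{i+1})-u(x_i)$ is just a reformulation of the paper's direct telescoping of $\Delta_\infty u(x_i)\ge 0$ along the path. The only loose end (shared with the paper's own path construction) is the degenerate possibility $S^+u(x_i)=+\infty$ at some vertex, where the selection rule must be read as ``pick a neighbor with sufficiently large increment''; the paper dispatches $L(u,x)\in\{0,+\infty\}$ separately at the outset, which you gloss but which is trivially absorbed.
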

\begin{proof}
 We firstly prove that the equivalence between the condition (i) and (ii). Suppose that condition (i) holds, then for any $x \in U$, $\Delta_{\infty}u(x) \geq 0$ implies that 
 \[
  \sup_{y \sim x}u(y) - u(x) \geq 0,
 \]
 and
 \[
  \sup_{y \sim x}u(y) - u(x) \geq u(x) - \inf_{y \sim x}u(y) \geq u(x) - \sup_{y \sim x}u(y).
 \]
 Thus,
 \[
  L(u, x) = \sup_{y \sim x}|u(y) - u(x)| = \sup_{y \sim x}u(y) - u(x),
 \]
 i.e., condition (ii) holds. 
 
 Now suppose that condition (ii) holds, then by the definition, 
 \[
  |\inf_{y \sim x}u(y) - u(x)| \leq \sup_{y \sim x}u(y) - u(x),
 \]
 and thus, $\Delta_{\infty}u(x) \geq 0$, i.e., condition (i) holds.
 
 Note that condition (iii) clearly implies condition(ii) by setting $r = 1$. Thus, to complete the proof, it suffices to show that condition (i) implies condition (iii).
 
 Suppose that $u$ is infinity subharmonic on $U$. For any $x \in U$, if $L(u, x) = 0$ or $L(u, x) = +\infty$, then it is easy to check that 
   \[
    L(u, x) \leq \frac{\sup\limits_{z \in B_r(x)}u(z) - u(x)}{r}, \ \text{ for any } 1 \leq r \leq d(x, \delta U).
   \]
  In the following, we assume that $+\infty > L(u, x) > 0$. For any $0 < \varepsilon < \frac{L(u, x)}{2}$, consider a path $P: x= x_0 \sim x_1 \sim \cdots \sim x_r$ satisfying 
  \[
   u(x_{i+1}) \geq \sup_{y \sim x_i}u(y) - \frac{\varepsilon}{2^i}, \ \forall \ 0 \leq i \leq r-1.
  \]
  Then we have 
  \[
   L(u, x) = \sup_{y \sim x_0}u(y) - u(x_0) \leq u(x_1) - u(x_0) + \varepsilon.
  \]
  Moreover, $\Delta_{\infty}u(x_i) \geq 0$ implies that
  \begin{align*}
   u(x_{i+1}) - u(x_i) &\geq u(x_i) - u(x_{i-1}) - \frac{\varepsilon}{2^i} \\
   &\geq u(x_{i-1}) - u(x_{i-2}) - \left(\frac{\varepsilon}{2^{i}} + \frac{\varepsilon}{2^{i-1}}\right) \\
   &\mathrel{\phantom{=}}\vdots \\
   &\geq u(x_1) - u(x_0) -  \left(\frac{\varepsilon}{2^{i}} + \frac{\varepsilon}{2^{i-1}} + \cdots +\frac{\varepsilon}{2}\right) \\
   &\geq L(u, x) -  \left(\frac{\varepsilon}{2^{i}} + \frac{\varepsilon}{2^{i-1}} + \cdots + \frac{\varepsilon}{2} + \varepsilon \right) \\
   &\geq L(u, x) - 2\varepsilon.
  \end{align*}
  It follows that 
  \[
   u(x_r) - u(x_0) \geq rL(u, x) - 2r\varepsilon.
  \]
  Note that $x_r \in B_r(x)$ and thus
  \[
   \frac{\sup\limits_{z \in B_r(x)}u(z) - u(x)}{r} \geq \frac{u(x_r) - u(x_0)}{r} \geq L(u, x) - 2\varepsilon.
  \]
  By letting $\varepsilon \to 0$, we get condition (iii).
\end{proof}

\begin{remark}
 Unlike the continuous case, in the discrete case, the condition that $u$ is infinity subharmonic on $U$ is not equivalent to the following condition:
   \begin{equation}\label{equ:counterexamplecondition}
    L(u, x) \leq \frac{\sup\limits_{z \in S_r(x)}u(z) - u(x)}{r}, \ \forall \ x \in U \text{ and } 1 \leq r \leq d(x, \delta U).
   \end{equation}
 See the following example.
\end{remark}

\begin{example}
 Consider the graph shown in Figure \ref{fig:counterexample}, where $$V = \{x, x_1, x_2, \cdots, x_n, \cdots, y_1, y_2, \cdots, y_n, \cdots\}.$$ The edges consist forms of $x \sim x_i$, $x_i \sim y_i$ and $x_i \sim x_{2i}$ for all $i \geq 1$. Let $U = \{x, x_1, x_2, \cdots, x_n, \cdots\}$ and thus, $\delta U = \{y_1, y_2, \cdots, y_n, \cdots\}$. Define function $u \in C(V)$ via $u(x) = 0$, $u(x_i) = 2i$ and $u(y_i) \equiv 1$. It is easy to check that $u$ is infinity subharmonic on $U$, $d(x, \delta U) = 2$ and $L(u, x) = \infty$. However, by letting $r = 2$, we have
 \[
  \frac{\sup\limits_{z \in S_2(x)}u(z) - u(x)}{2} = \frac{1}{2} < L(u, x).
 \]
      \begin{figure}[H]
        \centering
        \begin{tikzpicture}[scale=2]
            \draw[thick, gray] (0,0) -- (2,0) (0,0) -- (1.4,1.4) (0,0) -- (1.4,-1.4) (0,0) -- (0,-2) (0,0) -- (-2,0) (0.7,0.7) -- (1,0);
            \draw[gray, thick, dashed] (0,-1) arc(-90:-315:1);
            \draw[gray, thick] (1,0) .. controls (1.4,-1.4) .. (0,-1);
            \draw[gray, thick, dashed] (0.7,-0.7) .. controls (0,-2) .. (-0.7,-0.7);
            \node at (-0.3, 0.3) {$x$}; \node at (0.7,1.0) {$x_1$}; \node at (1.7,1.4) {$y_1$};
            \node at (1.3,0.3) {$x_2$}; \node at (2.3,0) {$y_2$}; \node at (0.7, -1.0) {$x_3$};
            \node at (1.7,-1.4) {$y_3$}; \node at (-0.3,-0.7) {$x_4$}; \node at (0, -2.3) {$y_4$};
            \node at (-1.3,0.3) {$x_n$}; \node at (-2.3, 0) {$y_n$};
            \foreach \a in {0,1,2}
                \filldraw (\a,0) circle(2pt) (0,-\a) circle(2pt) (-\a, 0) circle(2pt);
            \foreach \b in {0.7,1.4}
                \filldraw (\b,\b) circle(2pt) (\b,-\b) circle(2pt);
        \end{tikzpicture}
        \caption{Infinity subharmonic is not equivalent to the condition \eqref{equ:counterexamplecondition}.}
        \label{fig:counterexample}
    \end{figure}
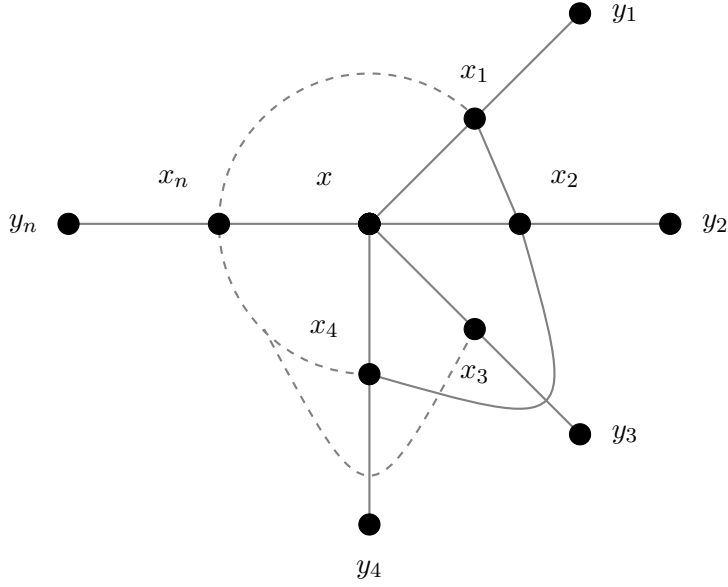
\end{example}
It is worth noting that if $u$ is bounded, then $\Delta_{\infty}u \geq 0$ on $U$ and condition \eqref{equ:counterexamplecondition} are equivalent by \Cref{thm:compare_result_for_inhomogeneous_equations_on_bounded_width_graphs}.

\section{For homogeneous equations}\label{sec3}
In this section, we study homogeneous equations on an unbounded graph $G = (V, E)$, where $V = U \sqcup \delta U$. For simplicity, we write 
 \[
  |x|:= d(x, \delta U),
 \]
 and for any $r \geq 1$,
 \[
  B_r(\delta U):= \{y \in U: |y| \leq r\};
 \]
 \[
  S_r(\delta U):= \{y \in U: |y| = r\}.
 \]
Let $u \in C(V)$ be infinity subharmonic on $U$. For any $\varepsilon > 0$, we firstly construct a function $u_{\varepsilon} \in C(V)$ that is infinity subharmonic and $L(u_{\varepsilon}, x) \geq \varepsilon$ on $U$. For this purpose, let $O_{\varepsilon} = \{x \in U: L(u, x) \leq \varepsilon\}$. Then for each connected component $K$ of $O_{\varepsilon}$, we define
\[
 w_{\varepsilon}(x):= \sup_{y \in \delta K}(u(y) - \varepsilon d_K(x, y)), \quad x \in K,
\]
where $d_K$ is the induced intrinsic metric of $K$, i.e., for any $x, y \in \overline{K}$,
\[
 d_K(x, y):=\inf\{n: x=x_0\sim x_1\sim \cdots \sim x_n = y, \text{with } x_i \in K, 1 \leq i \leq n-1\}.
\]
Consider the following functions
\begin{equation*}
 u_{\varepsilon}(x):= 
  \begin{cases}
   u(x), \quad &x \in V \setminus O_{\varepsilon}, \\
   w_{\varepsilon}(x), &x \in O_{\varepsilon}.
  \end{cases}
\end{equation*}

\begin{proposition}\label{prop:uvarepsilon}
    The functions $\{u_{\varepsilon}\}_{\varepsilon > 0}$ have the following properties:
    \begin{itemize}
        \item[(i)] $u_{\varepsilon}$ is infinity subharmonic on $U$.
        \item[(ii)] $u_{\varepsilon}=u$ on $V \setminus O_{\varepsilon}$, and $u_{\varepsilon}\leq u$ on $V$.
        \item[(iii)] $L(u_{\varepsilon},x) \geq \varepsilon$ for $x\in U$.
        \item[(iv)] $\lim\limits_{\varepsilon\to 0^+}u_{\varepsilon}(x)=u(x)$ for all $x\in U$.
    \end{itemize}
\end{proposition}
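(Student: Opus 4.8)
My plan is to prove the four items in the order (ii), (iii), (i), (iv), each feeding into the next, with one structural fact carrying most of the weight. That fact is an intrinsic Lipschitz bound: on a connected component $K$ of $O_{\varepsilon}$, the hypothesis $L(u,\cdot)\le\varepsilon$ makes $u$ genuinely $\varepsilon$-Lipschitz for the metric $d_K$, since along any admissible path $x=z_0\sim\cdots\sim z_n=y$ with interior vertices in $K$ each step satisfies $|u(z_{i+1})-u(z_i)|\le\varepsilon$, whence $|u(x)-u(y)|\le\varepsilon\,d_K(x,y)$. I would first note in passing that $w_{\varepsilon}$ is finite and well defined: $V$ connected and $K\subsetneq V$ force $\delta K\ne\emptyset$, every $x\in K$ reaches $\delta K$ in finitely many intrinsic steps, and the next estimate bounds the sup above by $u(x)$. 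Item (ii) is then immediate: applying the Lipschitz bound to $y\in\delta K$ gives $u(x)\ge u(y)-\varepsilon\,d_K(x,y)$, so $w_{\varepsilon}(x)\le u(x)$ and $u_{\varepsilon}\le u$ on $V$.

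For (iii) I split on whether $x\in O_{\varepsilon}$. If $x\in U\setminus O_{\varepsilon}$, then $L(u,x)>\varepsilon$, and by \Cref{prop:conditions_for_subharmonic}(ii) this slope is attained upward, so there are neighbors $y$ with $u(y)-u(x)>\varepsilon$; the key point is that any such $y$ cannot lie in $O_{\varepsilon}$, since membership would force $|u(x)-u(y)|\le\varepsilon$. Hence $u_{\varepsilon}(y)=u(y)$ there and $u_{\varepsilon}(y)-u_{\varepsilon}(x)>\varepsilon$. If instead $x\in K$, I fix $\delta>0$, pick $y^{\ast}\in\delta K$ with $u(y^{\ast})-\varepsilon\,d_K(x,y^{\ast})\ge w_{\varepsilon}(x)-\delta$, and take the first vertex $x_1$ after $x$ on an intrinsic geodesic to $y^{\ast}$, so $d_K(x_1,y^{\ast})\le d_K(x,y^{\ast})-1$; then $u_{\varepsilon}(x_1)\ge u(y^{\ast})-\varepsilon\,d_K(x_1,y^{\ast})\ge w_{\varepsilon}(x)+\varepsilon-\delta$ whether $x_1\in K$ or $x_1\in\delta K$, giving $S^{+}u_{\varepsilon}(x)\ge\varepsilon$ after $\delta\to0$. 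In both cases $L(u_{\varepsilon},x)\ge\varepsilon$.

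Item (i) reduces via \Cref{prop:conditions_for_subharmonic} to showing $\Delta_{\infty}u_{\varepsilon}(x)\ge0$. On $K$ I would complement the bound $S^{+}u_{\varepsilon}(x)\ge\varepsilon$ from (iii) with the downward bound $S^{-}u_{\varepsilon}(x)\le\varepsilon$: for $y\sim x$ with $y\in K$ the inequality $d_K(x,z)\le d_K(y,z)+1$ yields $w_{\varepsilon}(x)\ge w_{\varepsilon}(y)-\varepsilon$, and for $y\in\delta K$ the single edge gives $w_{\varepsilon}(x)\ge u(y)-\varepsilon$, so $u_{\varepsilon}(x)-u_{\varepsilon}(y)\le\varepsilon$; then $\Delta_{\infty}u_{\varepsilon}(x)=S^{+}u_{\varepsilon}(x)-S^{-}u_{\varepsilon}(x)\ge0$. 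For $x\in U\setminus O_{\varepsilon}$ I would compare $M'=\sup_{y\sim x}u_{\varepsilon}(y)$ and $m'=\inf_{y\sim x}u_{\varepsilon}(y)$ with the $u$-counterparts $M,m$: from $u_{\varepsilon}\le u$ we get $m'\le m$, while the near-supremal upward neighbors satisfy $u(y)>u(x)+\varepsilon$ and therefore lie outside $O_{\varepsilon}$, forcing $M'=M$; subharmonicity of $u$ (i.e.\ $M+m\ge2u(x)$) then gives $M'+m'\ge2u(x)=2u_{\varepsilon}(x)$. I expect this interface case to be the main obstacle: there $u_{\varepsilon}(x)=u(x)$ while neighboring values may have dropped, so subharmonicity is not automatic, and the resolution—the conceptual heart of the whole proposition—is precisely the dichotomy that a neighbor realizing a steep upward slope cannot belong to $O_{\varepsilon}$, so the upward supremum is preserved verbatim.

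Finally, for (iv) I would prove the quantitative estimate $u(x)-2\varepsilon|x|\le u_{\varepsilon}(x)\le u(x)$, the upper bound being (ii). When $x\in O_{\varepsilon}$, let $y_0\in\delta K$ realize $R_{\varepsilon}:=d_K(x,\delta K)$; the crucial finiteness is that a shortest combinatorial path from $x$ to $\delta U$ must leave $K$, exhibiting a vertex of $\delta K$ within intrinsic distance $|x|=d(x,\delta U)$, so $R_{\varepsilon}\le|x|$. The intrinsic Lipschitz bound gives $u(y_0)\ge u(x)-\varepsilon R_{\varepsilon}$, whence $w_{\varepsilon}(x)\ge u(y_0)-\varepsilon R_{\varepsilon}\ge u(x)-2\varepsilon|x|$. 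As the estimate holds trivially for $x\notin O_{\varepsilon}$, letting $\varepsilon\to0^{+}$ yields $u_{\varepsilon}(x)\to u(x)$, completing the proof.
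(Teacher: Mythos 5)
Your architecture tracks the paper's proof closely (the intrinsic $\varepsilon$-Lipschitz bound on components of $O_{\varepsilon}$ for (ii), the three-case analysis for (i) and (iii)), and your item (iv) is in fact cleaner than the paper's: the uniform estimate $u(x)-2\varepsilon|x|\le u_{\varepsilon}(x)\le u(x)$, via $d_K(x,\delta K)\le |x|$ along a shortest path to $\delta U$, avoids the paper's detour through the level-set component $\mathcal{N}_x$ and the radius $r_x$, and is correct as written; so are (ii) and (iii). But there is a genuine gap precisely at the interface case of (i) that you yourself flag as the heart of the matter. Writing $M'=\sup_{y\sim x}u_{\varepsilon}(y)$, $m'=\inf_{y\sim x}u_{\varepsilon}(y)$ and $M,m$ for the $u$-counterparts, you establish $M'=M$ and $m'\le m$, and then conclude $M'+m'\ge 2u(x)$ from $M+m\ge 2u(x)$. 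That is a non sequitur: $m'\le m$ bounds $m'$ from the wrong side, and replacing $u$ by $w_{\varepsilon}$ at $O_{\varepsilon}$-neighbors can only lower the infimum, which is exactly what threatens subharmonicity at $x$. The missing ingredient is a lower bound on $m'$: for a neighbor $y\in O_{\varepsilon}$ of $x$, one has $x\in\delta K$ for the component $K\ni y$, so the term $z=x$ in the supremum defining $w_{\varepsilon}(y)$ gives $u_{\varepsilon}(y)\ge u(x)-\varepsilon$; hence $m'\ge\min\{m,\,u(x)-\varepsilon\}$, which together with $M'=M>u(x)+\varepsilon$ yields $\Delta_{\infty}u_{\varepsilon}(x)\ge\min\{0,\Delta_{\infty}u(x)\}\ge 0$ --- the paper's inequality. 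Your dichotomy (steep upward neighbors lie outside $O_{\varepsilon}$) preserves $M$ but says nothing about $m'$, so the interface case does not close as written.

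There is also a direction slip in the case $x\in K$ of (i): both inequalities you display, $w_{\varepsilon}(x)\ge w_{\varepsilon}(y)-\varepsilon$ for $y\in K$ and $w_{\varepsilon}(x)\ge u(y)-\varepsilon$ for $y\in\delta K$, bound $u_{\varepsilon}(y)-u_{\varepsilon}(x)$ from above, i.e.\ they control $S^{+}u_{\varepsilon}(x)$, not $S^{-}u_{\varepsilon}(x)$, so the stated conclusion $u_{\varepsilon}(x)-u_{\varepsilon}(y)\le\varepsilon$ does not follow from them. For $y\in K$ the fix is the mirrored inequality $d_K(y,z)\le d_K(x,z)+1$, giving $w_{\varepsilon}(y)\ge w_{\varepsilon}(x)-\varepsilon$; for $y\in\delta K$ the correct route is not the single-edge term in the supremum but $u_{\varepsilon}(y)=u(y)\ge u(x)-\varepsilon\ge w_{\varepsilon}(x)-\varepsilon$, which uses $L(u,x)\le\varepsilon$ together with your item (ii), exactly as the paper does. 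Both this and the interface gap are repairable within your method, but as written the proof of (i) is incomplete in the two places where the construction is actually delicate.
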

\begin{proof}
 We firstly establish (ii). By definition, $u_{\varepsilon} = u$ on $V \setminus O_{\varepsilon}$. To show that $u_{\varepsilon} \leq u$ on $O_{\varepsilon}$, we claim that for any $x \in K$, $y \in \overline{K}$,
\[
 |u(x) - u(y)| \leq \varepsilon d_K(x, y),
\]
where $K$ is the connected component of $O_{\varepsilon}$ containing $x$. Indeed, for any path $P: x = x_0 \sim \cdots \sim x_n = y$ with $x_i \in K$, $i \leq n-1$, since $L(u, x_i) \leq \varepsilon$, we have
\[
 |u(x_i) - u(x_{i+1})| \leq \varepsilon, \quad i \leq n-1.
\]
It follows that
\[
 |u(x) - u(y)| \leq |u(x_0) - u(x_1)| + \cdots + |u(x_{n-1}) - u(x_n)| \leq n\varepsilon.
\]
This proves the claim. Thus, for any $x \in K$, $y \in \overline{K}$,
\[
 u(y) - \varepsilon d_K(x, y) \leq u(x),
\]
and hence, $w_{\varepsilon}(x) = u_{\varepsilon}(x) \leq u(x)$.

We now show that $u_{\varepsilon}$ is infinity harmonic on $U$. It suffices to prove that for any $x \in U$, $\Delta_{\infty}u_{\varepsilon}(x) \geq 0$. If $x$ is in the interior of $V \setminus O_{\varepsilon}$, that is, for any $y \sim x$, $y \notin O_{\varepsilon}$, then since $u_{\varepsilon} = u$ on $V \setminus O_{\varepsilon}$ and $u$ is infinity subharmonic on $U$, we have $\Delta_{\infty}u_{\varepsilon}(x) \geq 0$.

If $x \in \delta O_{\varepsilon}$, i.e., $x \notin O_{\varepsilon}$ and there exists $y \in O_{\varepsilon}$ such that $y \sim x$. Note that $u$ is infinity subharmonic implies that $L(u, x) > \varepsilon$. By \Cref{prop:conditions_for_subharmonic}, there exists $z \sim x$ such that $u(z) - u(x) > \varepsilon$. This implies that $z \notin O_{\varepsilon}$. Therefore, we have 
\begin{align*}
 \sup_{z \sim x}u_{\varepsilon}(z) - u_{\varepsilon}(x) &\geq \sup_{\substack{z \sim x \\ z \notin O_{\varepsilon}}}u_{\varepsilon}(z) - u(x) \\
 &= \sup_{\substack{z \sim x \\ z \notin O_{\varepsilon}}}u(z) - u(x) = \sup_{z \sim x}u(z) - u(x) > \varepsilon.
\end{align*}
On the other hand, 
\begin{align*}
 \inf_{z \sim x}u_{\varepsilon}(z) - u_{\varepsilon}(x) &= \min\left\{\inf_{\substack{z \sim x \\ z \in O_{\varepsilon}}}u_{\varepsilon}(z) - u_{\varepsilon}(x), \inf_{\substack{z \sim x \\ z \notin O_{\varepsilon}}}u_{\varepsilon}(z) - u_{\varepsilon}(x)\right\} \\
 &\geq \min\left\{-\varepsilon, \inf_{z \sim x}u(z) - u(x)\right\},
\end{align*}
and thus, 
\[
 \Delta_{\infty}u_{\varepsilon}(x) \geq \min\left\{0, \Delta_{\infty}u(x)\right\} \geq 0.
\]

If $x \in O_{\varepsilon}$, let $K$ be the connected component of $O_{\varepsilon}$ containing $x$. Since for any $z \in \delta K$, any path connecting $x$ and $z$ must pass through $S_1(x)$, there exists $x_z \in S_1(x)$ such that
\[
 d_K(x, z) = 1 + d_K(x_z, z) \geq 1 + \inf_{y \in S_1(x)}d_K(y, z),
\]
and thus,
\begin{align*}
 u(z) - \varepsilon d_K(x, z) &\leq \sup_{y \in S_1(x)}\left(u(z) - \varepsilon d_K(y, z)\right) - \varepsilon \\
 &\leq \max\left\{\sup_{\substack{y \in S_1(x) \\ y \in \delta K}}u(y), \sup_{\substack{y \in S_1(x) \\ y \in O_{\varepsilon}}}\left(u(z) - \varepsilon d_K(y, z)\right)\right\}-\varepsilon.
\end{align*}
It follows that 
\[
 u_{\varepsilon}(x) \leq \sup_{y \sim x}u_{\varepsilon}(y) - \varepsilon.
\]
On the other hand, for any $y \sim x$, if $y \in O_{\varepsilon}$, then since $d_K(y, z) \leq d_K(x, z) + 1$ for any $z \in \delta K$, we have $u_{\varepsilon}(y) \geq u_{\varepsilon}(x) - \varepsilon$. If $y \notin O_{\varepsilon}$, then since $L(u, x) \leq \varepsilon$, we have $u_{\varepsilon}(y) = u(y) \geq u(x) - \varepsilon \geq u_{\varepsilon}(x) - \varepsilon$. Thus, we always have that
\[
 \inf_{y \sim x}u_{\varepsilon}(y) - u_{\varepsilon}(x) \geq -\varepsilon,
\]
which implies that $\Delta_{\infty}u_{\varepsilon}(x) \geq 0$. We complete the proof of (i).

Note that the proof of (i) also implies $L(u_{\varepsilon}, x) \geq \varepsilon$ for $x \in U$, i.e., (iii) holds.

We finally prove (iv). For any $x \in U$, if $L(u, x) > 0$, then it is obvious that (iv) holds. In the following we assume that $L(u, x) = 0$. Let $\mathcal{N}_x$ be the connected component of $\{y \in V: u(y) = u(x)\}$. Since $L(u ,x) = 0$, we have $B_1(x) \subset \mathcal{N}_x$. Define
\[
 r_x := \min\{r: B_r(x) \subset \mathcal{N}_x, \exists \ y \in S_r(x) \text{ such that } y \in \delta U \text{ or } L(u, y) > 0\}.
\]
Note that $1 \leq r_x \leq |x|$. For such a $y \in B_{r_x}(x)$, if $y \in \delta U$, then for any $\varepsilon > 0$, otherwise for $\varepsilon < L(u, y)$, we have
\[
 u(x) \geq u_{\varepsilon}(x) \geq u(y) - \varepsilon r_x = u(x) - \varepsilon r_x \geq u(x) - \varepsilon |x| \to u(x), \ \text{as } \varepsilon \to 0.
\]
We complete the proof.
\end{proof}

\begin{remark}
 The definition of $O_{\varepsilon}$ cannot be changed to $\{x \in U: L(u, x) < \varepsilon\}$. If we were to define $O_{\varepsilon} = \{x \in U: L(u, x) < \varepsilon\}$, then $u_{\varepsilon}$ might not be infinity subharmonic on $U$. See the following example.
\end{remark}

\begin{example}
 Consider the graph shown in Figure \ref{fig:counterexample1}. Let $U = V \setminus \{\bar{x}\}$, and thus, $\delta U = \{\bar{x}\}$. Define the function $u$ via 
 \[
  u(x) = u(\bar{x}) = 0,
 \]
 \[ 
  u(y_i) = \varepsilon - \frac{\varepsilon}{2^i}, \quad \forall \ i \geq 1,
 \]
 \[
  u(z_i) = -\varepsilon + \frac{\varepsilon}{2^i}, \quad \forall \ i \geq 1,
 \] 
and then extend it linearly to the whole graph. It is easy to check that $u$ is infinity subharmonic on $U$. If we were to define $O_{\varepsilon} = \{w \in U: L(u, w) < \varepsilon\}$, then $O_{\varepsilon} = V \setminus \{x, \bar{x}\}$ and $\{x\} = \delta O_{\varepsilon}$. It follows that $u_{\varepsilon}(x) = u_{\varepsilon}(\bar{x}) = 0$, $u_{\varepsilon}(y_i) = u_{\varepsilon}(z_i) = -\varepsilon$, and thus, $\Delta_{\infty}u_{\varepsilon}(x) = -\varepsilon < 0$.
    \begin{figure}
        \centering
        \begin{tikzpicture}[scale=2]
            \draw[thick, gray] (-1,0) -- (2,0) (-0.7,-0.7) -- (1.4,1.4) (-0.7,0.7) -- (1.4,-1.4) (0,0) -- (0,2);
            \draw[gray, thick, dashed] (1.4,1.4) -- (2.5,2.5) (2,0) -- (3.5,0) (1.4,-1.4) -- (2.5,-2.5);
            \draw[gray, thick, dashed] (-0.5,0.85) -- (1.5, -2.55);
            \draw[gray, thick, dashed] (2.5,0) arc(-0:-90:2.5)  (-0.8,0) arc(-180:-260:0.8);
            \filldraw (0,2) circle(2pt);
            \node at (0, -0.3) {$x$}; \node at (0,2.3) {$\bar{x}$}; \node at (0.7,0.4) {$y_1$}; \node at (1.2,0.3) {$y_2$};
            \node at (1,-0.7) {$y_n$}; \node at (-0.7,-1) {$z_1$}; \node at (-1,-0.3) {$z_2$}; \node at (-1,0.7) {$z_n$};
            \foreach \a in {-1,0,1,2,3}
                \filldraw (\a,0) circle(2pt);
            \foreach \b in {-0.7,0.7,1.4,2.1}
                \filldraw (\b,\b) circle(2pt) (\b,-\b) circle(2pt);
        \end{tikzpicture}
        \caption{The definition of $O_{\varepsilon}$ cannot be changed to $\{x \in U: L(u, x) < \varepsilon\}$.}
        \label{fig:counterexample1}
    \end{figure}
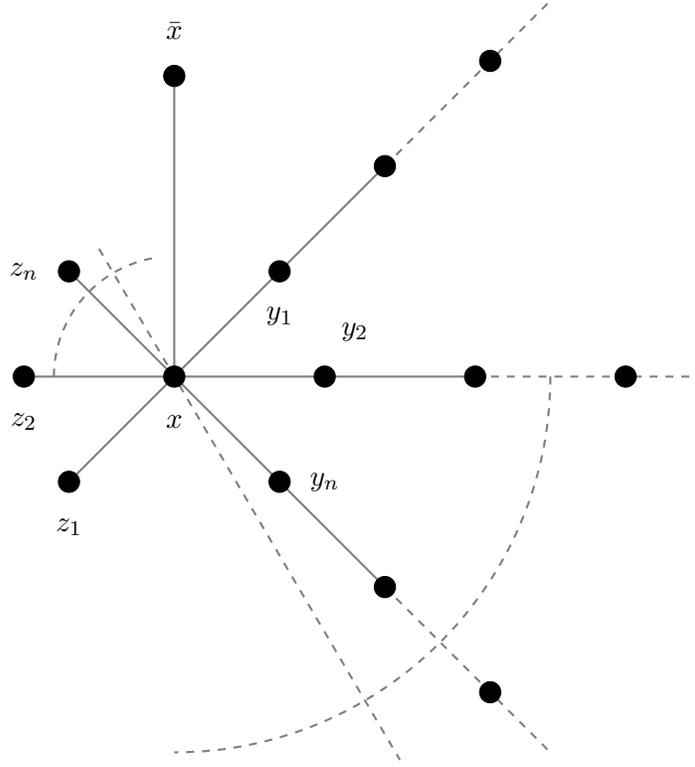
\end{example}

\begin{lemma}\label{lem:compare_for_uvarepsilon}
    Let $u\in C(V)$ be infinity subharmonic on $U$, $\varepsilon >0$ and $u_{\varepsilon}$ be defined as above. Suppose that
    \begin{equation*}
        \limsup_{r \to \infty}\frac{\sup\limits_{y \in B_r(\delta U)}u(x)}{r}\leq 0,
    \end{equation*}
    then 
    \begin{equation*}
        u_{\varepsilon}(x) \leq \sup\limits_{\delta U}u - \varepsilon |x|.
    \end{equation*}
\end{lemma}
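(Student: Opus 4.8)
The plan is to exploit the two crucial features of $u_\varepsilon$ recorded in \Cref{prop:uvarepsilon}: that $u_\varepsilon$ is infinity subharmonic on $U$ with $u_\varepsilon \le u$ everywhere, and that $L(u_\varepsilon,x)\ge\varepsilon$ for every $x\in U$. Write $M:=\sup_{\delta U}u$; if $M=+\infty$ the asserted inequality is trivial, so assume $M<\infty$. Combining $L(u_\varepsilon,x)\ge\varepsilon$ with the characterization (ii) of \Cref{prop:conditions_for_subharmonic} gives $\sup_{y\sim x}u_\varepsilon(y)-u_\varepsilon(x)=L(u_\varepsilon,x)\ge\varepsilon$ for each $x\in U$, so from any interior vertex there is a neighbour on which $u_\varepsilon$ is, up to an arbitrarily small error, at least $\varepsilon$ larger. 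This is exactly what is needed to run the greedy ascending-path construction from the proof of \Cref{prop:conditions_for_subharmonic}.

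Fix $x_0\in U$ and a parameter $\delta\in(0,\varepsilon/2)$. I would build a path $x_0\sim x_1\sim\cdots$ by selecting, as long as the current vertex $x_k$ lies in $U$, a neighbour with $u_\varepsilon(x_{k+1})\ge\sup_{y\sim x_k}u_\varepsilon(y)-\delta 2^{-k}$. The telescoping estimate from \Cref{prop:conditions_for_subharmonic}---which uses only $\Delta_\infty u_\varepsilon\ge 0$ at the interior vertices $x_1,\dots,x_{k-1}$ and $L(u_\varepsilon,x_0)\ge\varepsilon$ at the anchor---then yields $u_\varepsilon(x_{k+1})-u_\varepsilon(x_k)\ge\varepsilon-2\delta$ for every step taken while the path remains inside $U$.

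Next I distinguish two cases. If the path never leaves $U$, it is infinite and $u_\varepsilon(x_m)\ge u_\varepsilon(x_0)+m(\varepsilon-2\delta)$ for all $m$; since $x_m\in B_{|x_0|+m}(\delta U)$ and $u_\varepsilon\le u$, dividing $\sup_{y\in B_{|x_0|+m}(\delta U)}u(y)\ge u_\varepsilon(x_0)+m(\varepsilon-2\delta)$ by $|x_0|+m$ and letting $m\to\infty$ forces $\varepsilon-2\delta\le 0$ by the sublinearity hypothesis, contradicting $\delta<\varepsilon/2$. Hence the path must reach $\delta U$ at some first step $m$, where necessarily $m\ge d(x_0,\delta U)=|x_0|$. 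There $u_\varepsilon(x_m)=u(x_m)\le M$, and summing the per-step estimates gives $M\ge u_\varepsilon(x_0)+m(\varepsilon-2\delta)\ge u_\varepsilon(x_0)+|x_0|(\varepsilon-2\delta)$, that is, $u_\varepsilon(x_0)\le M-|x_0|(\varepsilon-2\delta)$. Letting $\delta\to 0^+$ yields $u_\varepsilon(x_0)\le \sup_{\delta U}u-\varepsilon|x_0|$, as required.

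The main obstacle is ruling out the escaping (infinite) path: this is precisely where the sublinear growth hypothesis is consumed, and it has to be paired with the choice $\delta<\varepsilon/2$ so that the per-step gain $\varepsilon-2\delta$ is strictly positive. Two technical wrinkles should be dispatched. First, the ascending path depends on $\delta$, so the bound must be established for each fixed $\delta$ and only afterwards passed to the limit. Second, the possibility $L(u_\varepsilon,x_k)=+\infty$ should be noted: in that case a single neighbour already makes $u_\varepsilon$ as large as desired, so the ascending estimate only improves and both the dichotomy and the final inequality go through unchanged.
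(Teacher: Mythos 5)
Your proposal is correct and is essentially the paper's own argument: a greedy ascending path along which $u_{\varepsilon}$ gains at least $\varepsilon$ minus a small error per step, with the sublinearity hypothesis (via $u_{\varepsilon}\le u$) ruling out an infinite path that never reaches $\delta U$, and the first exit time being at least $|x_0|$ yielding the bound. The only differences are cosmetic: the paper uses a fixed per-step error $\varepsilon_0<\varepsilon$ and applies $L(u_{\varepsilon},x_{i-1})\ge\varepsilon$ directly at each vertex via \Cref{prop:conditions_for_subharmonic}(ii), whereas you use geometrically decaying errors $\delta 2^{-k}$ and the anchor-based telescoping of \Cref{prop:conditions_for_subharmonic}(iii).
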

\begin{proof}
    Let $\varepsilon_0 < \varepsilon$. For any $x_0\in U$, consider an infinite path $P: x_0\sim x_1 \sim x_2 \sim \cdots$ satisfying
    $$u_{\varepsilon}(x_i) \geq \sup\limits_{y\sim x_{i-1}}u_{\varepsilon}(y)-\varepsilon_0.$$
    We assert that there exists a $K$ such that $x_K\in \delta U$. Otherwise, note that $u_{\varepsilon}$ is infinity subharmonic, then by Proposition \ref{prop:conditions_for_subharmonic} and Proposition \ref{prop:uvarepsilon}, we have
  \[
        \sup_{y\sim x_{i-1}}u_{\varepsilon}(y)-u_{\varepsilon}(x_{i-1}) = L(u_{\varepsilon},x_{i-1}) \geq \varepsilon,
  \]
  which implies that 
  \[
    u_{\varepsilon}(x_{i})-u_{\varepsilon}(x_{i-1}) \geq \varepsilon - \varepsilon_0.
  \]
    Then we have
    \begin{align}\label{eq:inequality_for_marching_path}
        u_{\varepsilon}(x_{i}) \geq (\varepsilon - \varepsilon_0)i + u_{\varepsilon}(x_0).
    \end{align}
    It follows that 
    \begin{align*}
       0 \geq \limsup_{r \to \infty}\frac{\sup\limits_{y \in B_r(\delta U)}u(y)}{r} &\geq \limsup_{r \to \infty}\frac{\sup\limits_{y \in B_r(\delta U)}u_{\varepsilon}(y)}{r} \\
       &\geq \limsup_{r \to \infty}\frac{\sup\limits_{|x_i| \leq r+|x_0|}u_{\varepsilon}(x_i)}{r+|x_0|} \\
       &\geq \limsup_{r \to \infty}\frac{u_{\varepsilon}(x_r)}{r+|x_0|} \\
       &\geq \varepsilon - \varepsilon_0 >0,
    \end{align*}
    where the penultimate inequality is due to the fact that $|x_r| \leq r + |x_0|$. The contradiction shows that the claim holds. Let $K_0$ be the first index such that $x_{K_0}\in \delta U$. Note that \eqref{eq:inequality_for_marching_path} holds for $i \le K_0$, we have 
    $$u_{\varepsilon}(x_{K_0}) \geq (\varepsilon - \varepsilon_0)K_0 + u_{\varepsilon}(x_0).$$
    Since $K_0 \geq |x_0|$ and $u_{\varepsilon}(x_{K_0}) \leq \sup\limits_{\delta U}u$, we have
    \begin{align*}
        u_{\varepsilon}(x_0) \leq \sup\limits_{\delta U}u - (\varepsilon-\varepsilon_0) |x_0|.
    \end{align*}
    By letting $\varepsilon_0 \to 0$, we get the result.
\end{proof}

We now prove \Cref{thm:compare_result_for_sublinear_on_graph}.
\begin{proof}[Proof of \Cref{thm:compare_result_for_sublinear_on_graph}]
    Let $\varepsilon > 0$, it suffices to establish the result for $u_{\varepsilon}$. By Lemma \ref{lem:compare_for_uvarepsilon}, there exists a $r_{\varepsilon}>0$ such that
    \begin{align*}
        u_{\varepsilon}(x) \leq v(x), \ \forall \ |x| \geq r_{\varepsilon}.
    \end{align*}
    Then the result follows from \Cref{thm:compare_result_for_inhomogeneous_equations_on_bounded_width_graphs}.
\end{proof}

With the help of \Cref{thm:compare_result_for_sublinear_on_graph}, we complete the proof of \Cref{thm:main_theorem_1}.
\begin{proof}[Proof of \Cref{thm:main_theorem_1}]
    The uniqueness follows from \Cref{thm:compare_result_for_sublinear_on_graph}. To complete the proof, it suffices to show the existence of the solution. For any $r\in \N^{+}$, by \Cref{thm:existence_and_uniquness_result_for_inhomogeneous_equations_on_bounded_width_graphs}, we know that the following equation admits a unique bounded solution, denoted by $u^{r}$: 
    \begin{align*}
        \begin{cases}
            -\Delta_{\infty}u^r(x)=0, \quad &x\in B_r(\delta U);\\
            u^{r}(x) = g(x) &x \in \delta U;\\
            u^{r}(x) = \sup g, &x \in S_{r+1}(\delta U).
        \end{cases}
    \end{align*}
    Then $u(x)=\lim\limits_{r\to\infty}u^{r}(x)$ is the bounded solution we want.
\end{proof}

We now consider the continuous case. Let $\Omega \subset \R^N$ be a domain with boundary $\partial \Omega$. For any $\varepsilon > 0$, we define 
\[
 \Omega_{\varepsilon}:=\{x \in \Omega: d_{\overline{\Omega}}(x, \partial \Omega) > \varepsilon\},
\]
where $d_{\overline{\Omega}}$ is the induced intrinsic metric of $\overline{\Omega}$. Given a continuous function $u \in C(\overline{\Omega})$, we use the notation 
\[
 u^{\varepsilon}(x):= \max_{\overline{B}_{\varepsilon}(x)}u \quad \text{and } \quad u_{\varepsilon}(x):= \min_{\overline{B}_{\varepsilon}(x)}u, \ x \in \Omega_{\varepsilon}.
\]
Recall that discrete infinity Laplacian $\Delta_{\infty}^{\varepsilon}$ on graph $G_{\varepsilon}= (\overline{\Omega}, E)$ is defined via
\[
 \Delta_{\infty}^{\varepsilon}u(x):=\inf\limits_{y\in B_x(\varepsilon)}u(y)+\sup\limits_{y\in B_x(\varepsilon)}u(y)-2u(x).
\]
\begin{lemma}[\cite{armstrong2010easy}]\label{lem:armstrong_result}
    If $u$ is infinity subharmonic on $\Omega$, then 
    \begin{equation}
        \Delta_{\infty}^{\varepsilon}u^{\varepsilon}(x) \geq 0, \ \forall \ x\in \Omega_{2\varepsilon},
    \end{equation}
    and if $v$ is infinity superharmonic on $\Omega$, then 
    \begin{equation}
        \Delta_{\infty}^{\varepsilon}v_{\varepsilon}(x) \leq 0, \ \forall \ x\in \Omega_{2\varepsilon}.
    \end{equation}
\end{lemma}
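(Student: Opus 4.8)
The plan is to deduce the statement from the classical equivalence between (viscosity) infinity subharmonicity and comparison with cones from above, after which everything reduces to elementary manipulations of the running maxima $u^{\varepsilon}$. I will prove the subharmonic assertion in detail; the superharmonic one then follows by applying it to $-v$, since $v_{\varepsilon}=-(-v)^{\varepsilon}$ and $\Delta_{\infty}^{\varepsilon}(-w)=-\Delta_{\infty}^{\varepsilon}w$, so that $\Delta_{\infty}^{\varepsilon}v_{\varepsilon}=-\Delta_{\infty}^{\varepsilon}(-v)^{\varepsilon}\le 0$.

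Fix $x\in\Omega_{2\varepsilon}$. First I would record a purely metric fact: because $d_{\overline{\Omega}}(x,\partial\Omega)>2\varepsilon$, the intrinsic ball $\overline{B}_{2\varepsilon}(x)$ coincides with the Euclidean ball and is contained in $\Omega$, so near $x$ there are no boundary effects and geodesics are straight segments. Using this, I claim the semigroup identity
\[
\max_{y\in\overline{B}_{\varepsilon}(x)}u^{\varepsilon}(y)=\max_{z\in\overline{B}_{2\varepsilon}(x)}u(z)=:u^{2\varepsilon}(x).
\]
The inequality $\le$ is the triangle inequality ($d_{\overline{\Omega}}(x,z)\le d_{\overline{\Omega}}(x,y)+d_{\overline{\Omega}}(y,z)\le 2\varepsilon$); for $\ge$, given $z\in\overline{B}_{2\varepsilon}(x)$ one takes the midpoint $y$ of the segment $[x,z]$, which satisfies $d_{\overline{\Omega}}(x,y)\le\varepsilon$ and $d_{\overline{\Omega}}(y,z)\le\varepsilon$, whence $u(z)\le u^{\varepsilon}(y)$. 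Thus the supremum part of $\Delta_{\infty}^{\varepsilon}u^{\varepsilon}(x)$ equals $u^{2\varepsilon}(x)$.

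Next I would bound the infimum part from below in the crudest possible way: for every $y\in\overline{B}_{\varepsilon}(x)$ one has $x\in\overline{B}_{\varepsilon}(y)$, so $u^{\varepsilon}(y)=\max_{\overline{B}_{\varepsilon}(y)}u\ge u(x)$, and therefore $\min_{y\in\overline{B}_{\varepsilon}(x)}u^{\varepsilon}(y)\ge u(x)$. The only remaining ingredient is the substantive one, namely comparison with cones from above: the equivalence of Crandall–Evans–Gariepy type (see also \cite{crandall2007uniqueness, armstrong2010easy}) says that $u$ infinity subharmonic forces the slope $r\mapsto\bigl(\max_{\overline{B}_{r}(x)}u-u(x)\bigr)/r$ to be nondecreasing on $(0,d_{\overline{\Omega}}(x,\partial\Omega))$. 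Taking $r=\varepsilon$ and $r=2\varepsilon$ gives $\frac{u^{\varepsilon}(x)-u(x)}{\varepsilon}\le\frac{u^{2\varepsilon}(x)-u(x)}{2\varepsilon}$, which rearranges to $2u^{\varepsilon}(x)\le u(x)+u^{2\varepsilon}(x)$. Combining the three facts,
\[
\Delta_{\infty}^{\varepsilon}u^{\varepsilon}(x)=u^{2\varepsilon}(x)+\min_{y\in\overline{B}_{\varepsilon}(x)}u^{\varepsilon}(y)-2u^{\varepsilon}(x)\ge u^{2\varepsilon}(x)+u(x)-2u^{\varepsilon}(x)\ge 0,
\]
as desired.

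I expect the main obstacle to be entirely contained in the cone-comparison step: translating the viscosity notion $\Delta_{\infty}u\ge 0$ into the monotone-slope inequality is the real analytic content, and the one place where the hypothesis is genuinely used, whereas the semigroup identity and the crude lower bound are elementary. A secondary point demanding a little care is the identification of intrinsic and Euclidean balls inside $\Omega_{2\varepsilon}$, which is precisely what licenses the midpoint argument in the semigroup identity and guarantees $\overline{B}_{2\varepsilon}(x)\subset\Omega$, so that the cone inequality may legitimately be invoked with radius $2\varepsilon$.
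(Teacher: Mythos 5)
The paper does not prove this lemma at all---it simply cites \cite{armstrong2010easy}---and your argument is precisely the proof given in that source: the semigroup identity $\max_{\overline{B}_{\varepsilon}(x)}u^{\varepsilon}=u^{2\varepsilon}(x)$, the crude bound $\min_{\overline{B}_{\varepsilon}(x)}u^{\varepsilon}\geq u(x)$, and the nondecreasing-slope consequence of comparison with cones yielding $2u^{\varepsilon}(x)\leq u(x)+u^{2\varepsilon}(x)$. Your preliminary observation that for $x\in\Omega_{2\varepsilon}$ the intrinsic ball $\overline{B}_{2\varepsilon}(x)$ coincides with the Euclidean one (any segment leaving $\Omega$ would produce a path to $\partial\Omega$ of length at most $2\varepsilon$) is exactly the point needed to transfer the Euclidean argument of \cite{armstrong2010easy} to the intrinsic metric $d_{\overline{\Omega}}$ used in this paper, so the proof is correct and takes the same route as the cited original.
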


\begin{proof}[Proof of \Cref{thm:comparison_result_of_continuous_case}]
 By \Cref{lem:armstrong_result} and \Cref{thm:compare_result_for_sublinear_on_graph}, for any $x \in \Omega_{2\varepsilon}$, we have
 \[
  u^{\varepsilon}(x) - v_{\varepsilon}(x) \leq \sup_{\Omega_{\varepsilon} \setminus \Omega_{2\varepsilon}}(u^{\varepsilon} - v_{\varepsilon}).
 \]
 By letting $\varepsilon \to 0$, we get the conclusion.
\end{proof}

At the end of this section, we complete the proof of \Cref{thm:existence_result_of_continuous_case}.
\begin{proof}[Proof of \Cref{thm:existence_result_of_continuous_case}]
 The uniqueness follows directly from \Cref{thm:comparison_result_of_continuous_case}. We only need to prove the existence of the solution. Let $\{\varepsilon_i\}$ be a sequence of positive numbers converging to $0$ as $i \to \infty$. Consider graphs $G_{\varepsilon_i} = (\overline{\Omega}, E)$ and following discrete infinity Laplace equations
 \begin{align*}
  \begin{cases}
   -\Delta_{\infty}^{\varepsilon_i}u_i(x) = 0, \quad &x \in \Omega, \\
   u_i(x) = g(x), &x \in \partial \Omega.
  \end{cases}
 \end{align*}
 For each $\varepsilon_i$, by \Cref{thm:main_theorem_1}, there exists a unique solution $u_i$ satisfying
 \[
  \limsup_{r \to \infty}\frac{\sup\limits_{d(y, \partial \Omega) \leq r}|u_i(y)|}{r} = 0,
 \]
 where $d$ is the distance on $G_{\varepsilon_i}$. Then on can follows the proof of \cite[Theorem 1.3]{han2025discrete} to get that there exist a uniformly continuous function $u \in C(\overline{\Omega})$ and a subsequence of $\{u_i\}$ such that $u_i$ converges to $u$ locally uniformly. Note that the proof of \cite[Theorem 2.11]{armstrong2012finite} can easily be adapted to our setting. Thus, by \cite[Theorem 2.11]{armstrong2012finite}, $u$ is a solution of equation \eqref{equ:homogeneous_equation_on_unbounded_domain} satisfying the sublinear condition. 
\end{proof}

\section{Inhomogeneous equations on trees}\label{sec4}
In this section, we study inhomogeneous equations on trees. Let $T=(V,E)$ be a tree, where $V=\overline{U}=U \sqcup \delta U$. We deem $\delta U$ as the set of roots of $T$ in this section. We use the following notions.
\begin{itemize}
    \item $|x| = d(x, \delta U)$.
    \item $x^{par}:=\{y\sim x:\ |y|=|x|-1\}$, which is the set of parents of $x$. Specially if $x$ has exactly one parent, $x^{par}$ also denotes the unique vertex.
    \item $x^{chd}:=\{y\sim x:\ |y|=|x|+1\}$, which is the set of children of $x$. 
    \item We say a path $P:\  x_1\sim x_2 \sim x_3 \sim \cdots$ is \emph{downward} if $|x_{i+1}|=|x_i|+1$.
    \item If $P:\ x_1\sim x_2 \sim x_3 \sim \cdots$ is a path and $f\in C(V)$, we write $\sum\limits_{P}f = \sum\limits_{i}f(x_i)$. 
    \item Let $\mathcal{P}$ be the set of downward paths and $\mathcal{P}_{x}$ be the set of downward paths starting from $x$.
\end{itemize}
Now given bounded functions $f\in C(U)$ with $f \geq 0$ and $g\in C(\delta U)$, we study the existence and uniqueness of sublinear solutions to the following equation
\begin{align}\label{eq:inhomogeneous_equation_on_a_tree}
    \begin{cases}
        \Delta_{\infty}u(x)=f(x), \quad &x\in U,\\
        u(x) = g(x), & x\in \delta U.
    \end{cases}
\end{align}

We start from the case that there is exactly one vertex $\bar{x}$ in $\delta U$, in which case we deem $T$ as a rooted tree with root $\bar{x}$. Note that for this case, $x \in U$ has exactly one parent.
\begin{lemma}\label{lem:existence_and_uniqueness_result_for_inhomogeneous_equations_on_a_rooted_tree}
    Suppose that $f\in C(U)$ is nonnegative. If $u$ is a solution of the following equation
    \begin{align}\label{eq:inhomogeneous_equation_on_a_rooted_tree}
        \begin{cases}
            \Delta_{\infty}u(x)=f(x), \ x\in U = V\setminus\{\bar{x}\},\\
            u(\bar{x})=0,
        \end{cases}
    \end{align} 
    and satisfies
    \begin{equation}\label{equ:sublinearcondition}
        \limsup\limits_{r \to \infty} \frac{\sup\limits_{y \in B_r(\bar{x})}|u(y)|}{r}=0,
    \end{equation}
    then for any $x \in U$,
    \begin{equation}\label{equ:sublinearsolution}
     u(x) = u(x^{par}) - \sup_{P \in \mathcal{P}_x}\sum_{P}f.
    \end{equation}
\end{lemma}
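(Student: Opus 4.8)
The plan is to prove the equivalent statement $D(x) := u(x^{par}) - u(x) = F(x)$ for all $x \in U$, where $F(x) := \sup_{P \in \mathcal{P}_x}\sum_P f$. Since $\Delta_{\infty} u = f \ge 0$, the solution $u$ is infinity subharmonic on $U$, so \Cref{prop:conditions_for_subharmonic} applies: we have $L(u,x) = \sup_{y\sim x} u(y) - u(x) \ge 0$, and there is a steepest-ascent path along which the increments do not decay. I would first pin down the sign structure of $u$ on the tree, then read off a recursion for $D$, and finally match $D$ with $F$ by two inequalities, each time exploiting the sublinear bound \eqref{equ:sublinearcondition}.

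\textbf{Step 1 (the crux).} The key structural claim is that the supremum over neighbours is always attained at the parent, i.e. $\sup_{y\sim x} u(y) = u(x^{par})$, equivalently $u(c) \le u(x^{par})$ for every child $c$ of $x$ (whence $D \ge 0$). I would argue by contradiction: if some child $c_*$ of $x_*$ satisfies $u(c_*) > u(x_*^{par})$ and $L(u,x_*)>0$, then the maximal neighbour of $x_*$ is a child, so the steepest-ascent path $x_*=z_0\sim z_1\sim\cdots$ of \Cref{prop:conditions_for_subharmonic} descends on its first step; since its increments obey $u(z_{i+1})-u(z_i)\ge L(u,x_*)-2\varepsilon>0$, returning to a parent would lower $u$, so the path must descend forever, giving $|z_i|=|x_*|+i$ and $u(z_i)\ge u(x_*)+i\bigl(L(u,x_*)-2\varepsilon\bigr)$. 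It cannot stop at a leaf, since there the equation forces $u\le u(\cdot^{par})$, contradicting a positive increment. Letting $\varepsilon\to 0$ contradicts \eqref{equ:sublinearcondition}. The residual case $L(u,x_*)=0$ is immediate: all neighbours are then $\le u(x_*)$, and $\Delta_{\infty}u(x_*)=\inf_{y\sim x_*}u(y)-u(x_*)=f(x_*)\ge 0$ forces every neighbour to equal $u(x_*)$, contradicting $u(c_*)>u(x_*^{par})$.

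\textbf{Steps 2 and 3 (recursion and matching).} Granting Step 1, at any $x\in U$ the infimum over neighbours is $\inf_{c\in x^{chd}}u(c)$, and substituting into $\sup_{y\sim x}u(y)+\inf_{y\sim x}u(y)-2u(x)=f(x)$ yields the recursion
\[
 D(x) = f(x) + \sup_{c \in x^{chd}} D(c),
\]
which is exactly the recursion obeyed by $F$, since a downward path from $x$ is $x$ followed by a downward path from one of its children. Here I use that every vertex of $U$ has a child, so downward paths are infinite; a leaf would force $D(x)=f(x)/2\neq f(x)=F(x)$, so the leafless setting is the relevant one. For $D\ge F$, telescoping the recursion along any downward path and discarding the nonnegative tail $D(x_n)$ gives $D(x_0)\ge\sum_{i=0}^{n-1}f(x_i)$, hence $D(x_0)\ge F(x_0)$ after $n\to\infty$ and taking the supremum over paths. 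For $D\le F$, if $D(x_0)-F(x_0)=\delta>0$ I propagate the excess downward, choosing at each step a child with $(D-F)(x_{i+1})>(D-F)(x_i)-\delta/2^{i+2}$, so that $D(x_i)>\delta/2$ along a downward path; then $u(x_n)=u(x_0)-\sum_{i=1}^{n}D(x_i)<u(x_0)-n\delta/2$ while $|x_n|=|x_0|+n$, again contradicting \eqref{equ:sublinearcondition}. Combining the two inequalities gives $D=F$, which is the claimed formula.

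\textbf{Main obstacle.} I expect Step 1 to be the hard part: extracting from infinity-subharmonicity a steepest-ascent path whose increments do not decay, arguing that once it turns downward it must descend forever, and converting the resulting linear growth into a contradiction with the sublinear bound. The degenerate case $L(u,x_*)=0$ and the leaf bookkeeping are the delicate points; once the parent is known to realize the supremum, the recursion and the two telescoping estimates are routine.
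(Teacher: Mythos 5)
Your proposal is correct and follows essentially the same route as the paper's proof: your Step 1 is the paper's claim (i) (the parent realizes the neighbourhood supremum), established by the same near-steepest-ascent path whose non-decaying increments (from subharmonicity, as in \Cref{prop:conditions_for_subharmonic}) clash with the sublinear bound on a tree, and your two telescoping inequalities $D\ge F$ and $D\le F$ are the paper's steps (iii) and (iv), run along the same downward (near-infimum) paths. The only organizational difference is that you bypass the paper's intermediate claim (ii) (that $u(x_{i+1})-u(x_i)\to 0$ along downward paths), controlling the tails instead by $D\ge 0$ in one direction and by your excess-propagation/linear-decay contradiction in the other — which is the same mechanism the paper itself uses to prove (ii) — and you commendably make explicit the leafless assumption that the paper leaves implicit.
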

\begin{proof}
    Let $u$ be a solution satisfying \eqref{equ:sublinearcondition}. For any $x\in U$, we only need to prove 
    \begin{itemize}
        \item[(i)] $u(x)\leq u(x^{par})$, which implies that $u(x^{par}) = \sup\limits_{y\sim x}u(y)$; 
        \item[(ii)] $\lim\limits_{i\to \infty}(u(x_{i+1})-u(x_i))=0$ for any downward path $P: x_0\sim x_1 \sim x_2 \sim \cdots$;
        \item[(iii)] $u(x) \leq u(x^{par})-\sup\limits_{P\in\mathcal{P}_{x}}\sum\limits_{P}f$;
        \item[(iv)] $u(x) \geq u(x^{par})-\sup\limits_{P\in\mathcal{P}_{x}}\sum\limits_{P}f$.
    \end{itemize}

    Firstly, we prove (i) by contradiction. Suppose that there exists $x \in U$ such that $u(x)-u(x^{par})= \delta >0$. By choosing $x_0=x^{par}, x_1 = x$ and suitable $x_2,x_3,\cdots$, we get a path
    $$P: x_0\sim x_1 \sim x_2 \sim \cdots$$ 
    such that $u(x_{i+1}) \geq \sup\limits_{y\sim x_{i}}u(y) - \frac{\delta}{2^{i+1}}$ for any $i\geq 1$. By the definition of $\Delta_{\infty}$, we have
    \begin{align*}
        0 \leq f(x_i) = \Delta_{\infty}u(x_i) 
        = &\sup_{y\sim x_i}u(y) + \inf_{y\sim x_i}u(y)-2u(x_i)\\
        \leq & [u(x_{i+1}) -u(x_i)] - [u(x_i)-u(x_{i-1})] + \frac{\delta}{2^{i+1}}.
    \end{align*}
    By summing the above inequality, we get
    \begin{align*}
        u(x_{k+1})-u(x_{k}) \geq u(x_1)-u(x_0) - \sum_{i=1}^{k}\frac{\delta}{2^{k+1}} \geq \frac{\delta}{2}, \ \forall \ k\geq 1.
    \end{align*}
    Since $u$ is sublinear, there exists a smallest $K<+\infty$ such that $|x_{K+1}| = |x_K|-1$. Since $T$ is a rooted tree, $x_{K+1} = x_{K-1}$. Then
    \begin{align*}
     u(x_{K+1}) &\geq u(x_K) + \frac{\delta}{2} \\
     &\geq u(x_{K-1}) + \delta = u(x_{K+1}) + \delta,
    \end{align*}
    which is impossible.

    Next we prove (ii) by contradiction. Suppose $\liminf\limits_{i\to\infty}(u(x_{i+1})-u(x_i)) = -b <0$. For any $k$ there exists a $K>k$ such that $u(x_{K+1}) -u(x_K) \leq \frac{-b}{2}$. Note that $\sup\limits_{y\sim x_i}u(y) = u(x_{i-1})$ for any $i\ge 1$ by (i). Then we have
    \begin{align*}
        0 \leq f(x_i) = \Delta_{\infty}u(x_i) 
        = & u(x_{i-1}) + \inf_{y\sim x_i}u(y)-2u(x_i)\\
        \leq & [u(x_{i+1}) -u(x_i)] - [u(x_{i})-u(x_{i-1})],
    \end{align*}
    which implies $u(x_{i+1}) -u(x_i) \leq \frac{-b}{2}$ for any $i\leq K$. Then $u(x_{k+1}) \leq u(x_0) - \frac{b(k+1)}{2}$, which is impossible since $u$ is sublinear.
    
    Now we prove (iii). Let $x_0=x^{par}, x_1 = x$. For any downward path 
    $$P:\ x_0\sim x_1 \sim x_2 \sim \cdots, $$ 
    we have 
    \begin{align*}
        f(x_i) = \Delta_{\infty}u(x_i) 
        = &u(x_{i-1}) + \inf_{y\sim x_i}u(y)-2u(x_i)\\
        \leq & [u(x_{i-1}) -u(x_i)] - [u(x_i)-u(x_{i+1})].
    \end{align*}
    By (ii), we can sum the above inequality, and obtain
    \begin{align*}
        \sum_{i=1}f(x_i) \leq u(x^{par})-u(x).
    \end{align*}
    This proves (iii).

    Finally, we prove (iv). We only need to prove the result under the assumption that $\sup\limits_{P\in\mathcal{P}_{x}}\sum\limits_{P}f < +\infty$. Let $x_0=x^{par}, x_1 = x$. Choose a downward path
    \begin{align*}
        P:\ x_0\sim x_1 \sim x_2 \sim \cdots
    \end{align*}
    such that $u(x_i) \leq \inf\limits_{y\sim x_{i-1}}u(y)+\frac{\varepsilon}{2^i}$, where $\varepsilon >0$. Then 
    \begin{align*}
        f(x_i) = \Delta_{\infty}u(x_i) 
        = &u(x_{i-1}) + \inf_{y\sim x_i}u(y)-2u(x_i)\\
        \geq & [u(x_{i-1}) -u(x_i)] - [u(x_i)-u(x_{i+1})]-\frac{\varepsilon}{2^{i+1}}.
    \end{align*}
    By summing the above inequality, we get
    \begin{align*}
        \sup_{P\in\mathcal{P}_{x}}\sum_{P}f \geq \sum_{i=1}f(x_i) \geq u(x^{par})-u(x)-\frac{\varepsilon}{2}.
    \end{align*}
    The result follows by letting $\varepsilon \to 0$.
\end{proof}

\begin{remark}\label{rmk:necessray_condition_for_sublinear_solution}
 Lemma \ref{lem:existence_and_uniqueness_result_for_inhomogeneous_equations_on_a_rooted_tree} shows that if $u$ is a sublinear solution of equation \eqref{eq:inhomogeneous_equation_on_a_rooted_tree}, then it satisfies \eqref{equ:sublinearsolution}. This provides a necessary condition for the existence of sublinear solutions to equation \eqref{eq:inhomogeneous_equation_on_a_rooted_tree}:
 \[
  \sup\limits_{P\in\mathcal{P}_{x}}\sum\limits_{P}f < +\infty, \quad \forall \ x \in U.
 \]
 However, it is not a sufficient condition: a function $u$ satisfying \eqref{equ:sublinearsolution} is not necessarily sublinear. See the following Example \ref{ex:counterexample2}.
\end{remark}

\begin{example}\label{ex:counterexample2}
 Consider the tree $T$ shown in Figure \ref{fig:counterexample2}. The tree $T$ is essentially composed of countably many half-lines isomorphic to $\mathbb{Z}_+$, all emanating from the vertex $\bar{x}$. On the $k$-th half-line, we define $f(k) = 1$ and $f(j) = 0$ for all $j \neq k$. That is, on the first half-line, $f(1) = 1$ and $f(j) = 0$ for all $j \neq 1$; on the second half-line, $f(2) = 1$ and $f(j) = 0$ for all $j \neq 2$, and so on. Defining the function $u$ according to equation \eqref{equ:sublinearsolution} with $u(\bar{x})=0$, then it is easy to verify that u is not sublinear.
     \begin{figure}
        \centering
        \begin{tikzpicture}
            \draw[thick, gray] (0,0) -- (3,0) (0,0) -- (2.1,2.1) (0,0) -- (2.1,-2.1);
            \draw[gray, thick, dashed] (2.1,2.1) -- (3.5,3.5) (3,0) -- (5,0) (2.1,-2.1) -- (3.5,-3.5);
            \draw[gray, thick, dashed] (0,0) -- (5, 2.5) (0,0) -- (6,1.5) (0,0) -- (5, -2.5) (0,0) -- (6,-1.5) (0,0) -- (2,-4.5);
            \draw[gray, thick, dashed] (3.5,0.5) arc(20:40:4) (3.5,-0.5) arc(-20:-40:4) (1.5,-2.5) arc(-60:-80:4);
            \node at (-0.3, 0) {$\bar{x}$};
            \foreach \a in {0,1,2,3,4}
                \filldraw (\a,0) circle(2pt);
            \foreach \b in {0.7,1.4,2.1,2.8}
                \filldraw (\b,\b) circle(2pt) (\b,-\b) circle(2pt);
        \end{tikzpicture}
        \caption{A function satisfying \eqref{equ:sublinearsolution} is not necessarily sublinear.}
        \label{fig:counterexample2}
    \end{figure}
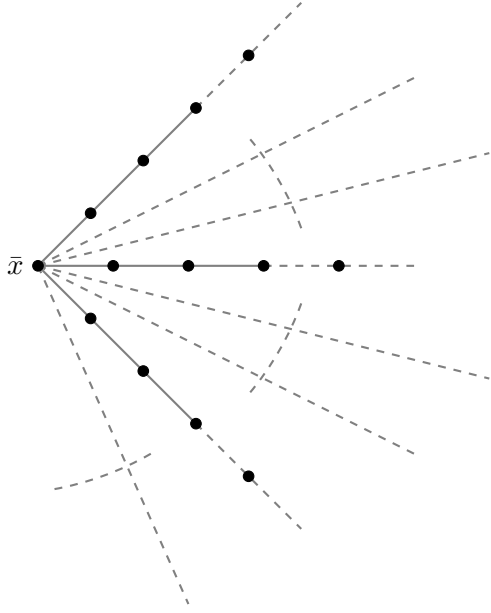
\end{example}

Now we consider the general case. Recall that a tree $T$ has bounded boundary if $\sup\limits_{x,y \in \delta U}d(x, y) < \infty$.

\begin{proof}[Proof of \Cref{thm:inhomogeneous_equations_general_case}]
   We deem $T$ as a tree with multiple roots $\delta U$ and write $$M=\sup\limits_{x,y\in \delta U} d(x,y).$$ If $|x| \geq M+1$, then there exists exactly one $x^{par}\sim x$. Moreover $x^{par}$ lays on all paths connecting $x$ and $\delta U$. If $u$ is a solution satisfying the sublinear condition, then by \Cref{lem:existence_and_uniqueness_result_for_inhomogeneous_equations_on_a_rooted_tree} we have 
    \begin{equation}\label{equ:111111}
        u(x^{par})-u(x) = \sup_{P\in\mathcal{P}_{x}}\sum_{P} f \geq 0,\ \forall \ |x|\geq M+1.
    \end{equation}
    Let $T'$ be the graph induced by $\{x\in V:\ |x| \leq M+1\}$. Note that $\Delta_{\infty}u(x)=2(u(x^{par})-u(x))$ for $|x|=M+1$ on $T'$, then $u$ satisfies the following equation on $T'$:
    \begin{align*}
        \begin{cases}
            \Delta_{\infty}u(x)=f(x), \ &1\leq |x| \leq M,\\
            \Delta_{\infty}u(x)=2\sup\limits_{P\in\mathcal{P}_{x}}\sum\limits_{P} f, &|x| = M+1,\\
            v|_{\delta U}=g.
        \end{cases}
    \end{align*}
    By \Cref{thm:existence_and_uniquness_result_for_inhomogeneous_equations_on_bounded_width_graphs}, the above equation admits a unique solution $u$. By equation \eqref{equ:111111}, we can extend $u$ to the whole $T$, which is the unique solution we want.    
\end{proof}

\section*{Acknowledgments}
Part of this work was completed during the second author's visit to the Max Planck Institute for Mathematics in the Sciences, during which they enjoyed the warm hospitality of the institute.

\bibliographystyle{plain}
\bibliography{bib}

\end{document}